\newtheorem{theorem}{Theorem}[section]
\newtheorem{corollary}[theorem]{Corollary}
\newtheorem{proposition}[theorem]{Proposition}
\newtheorem{question}[theorem]{Question}
\theoremstyle{definition}
\newtheorem{definition}[theorem]{Definition}
\newtheorem{example}[theorem]{Example}
\theoremstyle{remark}
\newtheorem{remark}[theorem]{Remark}
\newcommand{\C}{\mathbb{C}}
\newcommand{\Cn}{\mathbb{C}^n}
\newcommand{\B}{\mathbb{B}}
\newcommand{\Bn}{\mathbb{B}^n}
\newcommand{\U}{\mathbb{U}}
\newcommand{\N}{\mathbb{N}}
\newcommand{\ov}{\overline}
\newcommand{\na}{\mathcal{N}_A}
\begin{document}

\title[Approximation properties of univalent mappings]{Approximation properties of univalent mappings on the unit ball
in $\Cn$
}

\author{Hidetaka Hamada, Mihai Iancu, Gabriela Kohr and
Sebastian Schlei{\ss}inger
}


\address{
H. Hamada
\\
Faculty of Engineering, Kyushu Sangyo University, 3-1 Matsukadai 2-Chome,
Higashi-ku Fukuoka 813-8503, Japan}
\email{h.hamada@ip.kyusan-u.ac.jp}

\address{M. Iancu
\\
Faculty of Mathematics and Computer Science,
Babe\c{s}-Bolyai University, 1 M. Kog\u{a}l\-niceanu Str., 400084
Cluj-Napoca, Romania}
\email{miancu@math.ubbcluj.ro}

\address{G. Kohr
\\
Faculty of Mathematics and Computer Science,
Babe\c{s}-Bolyai University, 1 M. Kog\u{a}l\-niceanu Str., 400084
Cluj-Napoca, Romania}
\email{gkohr@math.ubbcluj.ro}

\address{S. Schlei{\ss}inger
\\
Department of Mathematics,
University of W\"urzburg, Am Hubland,
97074 W\"urzburg, Germany}
\email{sebastian.schleissinger@mathematik.uni-wuerzburg.de}

\date{}

\maketitle

\begin{abstract}
Let $n\geq 2$.
In this paper, we obtain
approximation properties of various families of normalized univalent mappings $f$
on the Euclidean unit ball $\mathbb{B}^n$ in $\mathbb{C}^n$
by automorphisms of $\mathbb{C}^n$
whose restrictions to $\mathbb{B}^n$ have the same geometric property of $f$.
First, we obtain approximation properties
of spirallike, convex and $g$-starlike mappings $f$ on $\mathbb{B}^n$
by automorphisms of $\mathbb{C}^n$
whose restrictions to $\mathbb{B}^n$ have the same geometric property of $f$, respectively.
Next, for a nonresonant operator $A$ with $m(A)>0$,
we obtain an approximation property of
mappings which have $A$-parametric representation
by automorphisms of $\mathbb{C}^n$
whose restrictions to $\mathbb{B}^n$ have $A$-parametric representation.
Certain questions will be also mentioned.
Finally, we obtain an approximation property by automorphisms of $\mathbb{C}^n$
for a subset
of $S_{I_n}^0(\mathbb{B}^n)$ consisting of mappings $f$ which satisfy the condition
$\|Df(z)-I_n\|<1$, $z\in\mathbb{B}^n$. Related results will be also obtained.
\end{abstract}

\subjclass{Primary 32H02; Secondary 30C45}

\keywords{Keywords:
Automorphism,
convex mapping,
Loewner differential equation,
parametric representation,
spirallike mapping,
starlike mapping}

\section{Introduction}
\label{intro}
\setcounter{equation}{0}
In this paper we continue the work related to embedding univalent mappings
in Loewner chains on the unit ball $\Bn$ in $\Cn$ (see \cite{Ar},
\cite{GHKK-JAM}, \cite{GHKK13}, \cite{GHKK15}, \cite{HIK}, \cite{Vo}).
If $f$ is a biholomorphic mapping on $\B^n$ such that $f(\B^n)$ is Runge, then
$f$ can be approximated locally uniformly on $\Bn$ by automorphisms of $\Cn$
by \cite[Theorem 2.1]{And-Lemp}, for all $n\geq 2$.
Then a natural question arises as follows:
Is it possible to approximate $f$ by automorphisms of $\Cn$
whose restrictions to $\Bn$ have the same geometric property of $f$?

In this paper, we obtain
approximation properties of various families of normalized univalent mappings $f$
on $\Bn$ by automorphisms of $\Cn$
whose restrictions to $\Bn$ have the same geometric property of $f$, for all $n\geq 2$.
Indeed, in view of a recent result of Hamada \cite{Ha2}, which yields that
any spirallike domain in $\Cn$
with respect to an operator $A\in L(\C^n)$, such that $m(A)>0$, is Runge,
we prove that, if $n\geq 2$, then every spirallike mapping with respect to $A$
may be approximated locally uniformly on $\Bn$ by automorphisms of $\Cn$ whose
restrictions to $\Bn$ are spirallike with respect to $A$.
In particular, any starlike mapping on $\Bn$ may be approximated
locally uniformly on $\Bn$ by automorphisms of $\Cn$ whose restrictions to $\Bn$
are starlike, for all $n\geq 2$.
Also, we prove that, if $n\ge 2$, then any convex (univalent) mapping on $\Bn$
may be approximated
locally uniformly on $\Bn$ by automorphisms of $\Cn$ whose restrictions to $\Bn$
are convex. This result is in contrast to the case of
the unit polydisc in $\Cn$, where a similar approximation property does not hold.
Similar properties also hold in the case of
$g$-starlike mappings
on $\B^n$, where $g$ is a univalent function on the unit disc $\U$ such that $g(0)=1$
and $\Re g(\zeta)>0$, $\zeta\in \U$, $n\geq 2$.
On the other hand, we also prove that in dimension $n\geq 2$,
the family of normalized automorphisms of $\Cn$
whose restrictions to
$\Bn$ have $A$-parametric representation is dense in the family $S_A^0(\Bn)$, where
$A\in L(\Cn)$ is a nonresonant operator such that $m(A)>0$.
A similar property also
holds in the case of the reachable family
$\widetilde{\mathcal R}_T({\rm id}_{\Bn},{\mathcal N}_A)$
generated by the Carath\'eodory
family ${\mathcal N}_A$, where $T>0$.
These results are generalizations
of recent results in \cite{I1} and \cite{S}.
However, the method that
we use in this paper is different from those used in the above works.

In the last part of this paper, we obtain an approximation property by automorphisms of $\C^n$
($n\geq 2$) for a subset of $S_{I_n}^0(\B^n)$ consisting of mappings $f$ which satisfy the condition
$\|Df(z)-I_n\|<1$, $z\in\B^n$. Related results will
be also obtained.

The main results of this paper can be summarized as follows.
The notations will be explained in the next sections.

\begin{theorem}
\label{introt2}
Let $A\in L(\Cn)$ be such that $m(A)>0$. Then
$$\widehat{S}_A(\Bn)=\overline{\widehat{S}_A(\Bn)\cap \mathcal{A}(\Bn)},\quad \forall\, n\geq 2.$$
In particular,
$S^*(\Bn)=\overline{S^*(\Bn)\cap \mathcal{A}(\Bn)}$, $\forall\,n\geq 2.$
\end{theorem}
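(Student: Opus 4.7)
The plan is to combine Hamada's Runge theorem for $A$-spirallike domains \cite{Ha2} with the Anders\'en--Lempert approximation theorem \cite{And-Lemp}, refined so that the approximating automorphisms have $A$-spirallike restrictions to $\Bn$. The reverse inclusion $\overline{\widehat{S}_A(\Bn)\cap \mathcal{A}(\Bn)}\subset \widehat{S}_A(\Bn)$ is essentially formal, since $A$-spirallikeness survives locally uniform convergence when the limit is still univalent with $Df(0)=I_n$; the content is in the forward inclusion. Given $f \in \widehat{S}_A(\Bn)$ with image $\Omega = f(\Bn)$, the hypothesis $m(A) > 0$ together with \cite{Ha2} implies that $\Omega$ is Runge. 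By \cite{And-Lemp}, $f$ can already be approximated locally uniformly on $\Bn$ by automorphisms of $\Cn$, but such automorphisms need not be spirallike themselves, so an extra idea is required.

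To enforce the spirallike property, I would exploit the canonical $A$-spirallike Loewner chain $f_t(z) := e^{tA} f(z)$, $t \geq 0$. Each $f_t$ lies in $\widehat{S}_A(\Bn)$, each image $e^{tA}\Omega$ is again a Runge $A$-spirallike domain by \cite{Ha2}, and the Loewner range equals $\Cn$ since $m(A) > 0$. For large $T$, approximate $f_T$ locally uniformly on $\Bn$ by an automorphism $G_T$ of $\Cn$ and set $F_T := e^{-TA} \circ G_T$. Because $e^{-TA}$ is a linear automorphism of $\Cn$, $F_T \in \Aut(\Cn)$, and since $\|e^{-TA}\|$ is controlled under $m(A)>0$, we obtain $F_T \to f$ locally uniformly on $\Bn$ when $G_T\to f_T$. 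After dividing by $DF_T(0)$ and subtracting $F_T(0)$ (both close to $I_n$ and $0$ respectively), one may assume $F_T$ is also normalized.

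The main obstacle is to arrange that $F_T|_{\Bn}$ is itself $A$-spirallike, which amounts to the geometric condition $e^{-tA} G_T(\Bn) \subset G_T(\Bn)$ for all $t \geq 0$. This is only approximately true from a generic Anders\'en--Lempert approximation, and this is where I expect the real work. I expect the fix to follow the Loewner/Anders\'en--Lempert pattern of \cite{Ar} and \cite{HIK}: one writes the Loewner ODE associated with the chain $(f_t)$, whose Herglotz vector field is the time-independent $h(z) = Df(z)^{-1} A f(z) \in \na$, approximates $h$ by complete polynomial vector fields that still induce $A$-spirallike images, and solves the approximating ODE to produce automorphisms $F_T$ whose images $F_T(\Bn)$ are $A$-spirallike by construction. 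Combined with Hamada's Runge theorem to guarantee the required polynomial approximation of $h$ on every $e^{tA}\Omega$, this yields the desired approximants. The particular case $S^*(\Bn) = \overline{S^*(\Bn) \cap \mathcal{A}(\Bn)}$ then follows by specializing to $A = I_n$, for which $m(I_n) = 1 > 0$ and $\widehat{S}_{I_n}(\Bn) = S^*(\Bn)$.
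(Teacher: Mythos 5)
Your setup (Hamada's Runge theorem plus Anders\'en--Lempert) matches the paper, and you correctly identify the crux: the raw Anders\'en--Lempert automorphisms need not restrict to spirallike maps. But the mechanism you propose to fix this diverges from the paper and has a real gap. Your intermediate device of setting $F_T=e^{-TA}\circ G_T$ accomplishes nothing toward spirallikeness: if $G_T$ is close to $e^{TA}f$ then $F_T$ is close to $f$, but the composition with the linear automorphism $e^{-TA}$ does not transport the (approximate) spirallike geometry of $G_T(\Bn)$ to $F_T(\Bn)$, so you are back exactly where you started. More importantly, the route you then fall back on --- approximating the Herglotz vector field $h(z)=Df(z)^{-1}Af(z)$ by complete polynomial vector fields ``that still induce $A$-spirallike images'' --- states the entire difficulty as a hoped-for property without any argument. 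The Anders\'en--Lempert shear decomposition gives no control over whether the resulting time-$T$ flow maps $\Bn$ onto an $A$-spirallike domain, and nothing in \cite{Ha2} supplies that control. (In fact the paper does use a vector-field-level argument, but only later, for Theorem \ref{p2}, and that argument \emph{invokes} Theorem \ref{tspst}; using it to prove Theorem \ref{tspst} would be circular.)

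The paper's actual proof is considerably simpler and avoids vector fields entirely. By Suffridge (Proposition \ref{spiral}), $f\in\widehat{S}_A(\Bn)$ iff $\Re\langle (Df(z))^{-1}Af(z),z\rangle\ge 0$ on $\Bn$, and by the growth bound (\ref{growth1}) this inequality is \emph{strictly} positive, with a uniform lower bound, on each closed sub-ball $r\overline{\Bn}$, $r<1$. Anders\'en--Lempert gives normalized automorphisms $\psi_k\to f$, and the kernel-convergence result from \cite{DGHK} upgrades this to $(D\psi_k)^{-1}A\psi_k\to (Df)^{-1}Af$ locally uniformly. Hence for $r_m\to 1$ one can pick $k_m$ so that the strict inequality survives on $r_m\overline{\Bn}$, and the dilation $\varphi_m(z)=\frac{1}{r_m}\psi_{k_m}(r_m z)$ is then a normalized automorphism in $\widehat{S}_A(\Bn)$, with $\varphi_m\to f$. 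The reverse inclusion uses that $\widehat{S}_A(\Bn)$ is closed, again via Proposition \ref{spiral}. You should fill your gap by replacing the vector-field sketch with this dilation-plus-strict-inequality argument.
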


\begin{theorem}
\label{introt3}
If $n\geq 2$, then $K(\Bn)=\overline{K(\Bn)\cap \mathcal{A}(\Bn)}$.
\end{theorem}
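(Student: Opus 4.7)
The plan is to exploit that convex mappings on $\Bn$ are stable under a Study-type shrinkage, so that each $f \in K(\Bn)$ is a limit of mappings that are convex on a \emph{strictly larger} ball than $\Bn$. Such enlarged convex mappings can then be approximated by $\Aut(\Cn)$ via \cite{And-Lemp}, and a quantitative convexity margin should propagate through to the approximants on $\Bn$.

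Concretely, given $f \in K(\Bn)$, I would set $f_r(z) = r^{-1} f(rz)$ for $r \in (0,1)$, so that $f_r$ is normalized, holomorphic on $(1/r)\Bn$, and converges to $f$ locally uniformly on $\Bn$ as $r \to 1^-$. The Study-type shrinkage theorem for convex mappings on the Euclidean ball asserts that $f(s\Bn)$ is convex for every $s \in (0,1)$, so for any $\rho \in (1, 1/r)$ the image $f_r(\rho \Bn) = r^{-1} f(r\rho \Bn)$ is convex, hence Runge in $\Cn$. Applying \cite[Theorem~2.1]{And-Lemp} to the biholomorphism $f_r \colon \rho\Bn \to f_r(\rho\Bn)$ produces $\Psi_k \in \Aut(\Cn)$ with $\Psi_k \to f_r$ uniformly on compacta of $\rho\Bn$; picking an intermediate radius $\rho' \in (1, \rho)$ and using Cauchy's inequalities upgrades this to $C^2$-convergence on $\overline{\Bn}$. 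A post-composition with the affine automorphism $w \mapsto (D\Psi_k(0))^{-1}(w - \Psi_k(0))$ restores the normalization and preserves convexity of images.

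The main obstacle is to guarantee that $\Psi_k|_{\Bn} \in K(\Bn)$ eventually. Convexity in $K(\Bn)$ admits a pointwise characterization by a closed second-order inequality on the $2$-jet of $f$. Since $f_r$ is convex on the strictly larger ball $\rho\Bn$, this inequality holds on $\overline{\Bn}$ with a uniform positive margin depending only on $\rho - 1$. Small $C^2$-perturbations of $f_r$ therefore still satisfy the (non-strict) inequality on $\Bn$, so $\Psi_k|_{\Bn}$ is convex for $k$ large; a diagonal extraction with $r_j \to 1^-$ and a corresponding $\Psi_{k_j}$ close to $f_{r_j}$ then completes the approximation. I expect the quantitative stability step — pinning down the convexity margin for $f_r$ on $\overline{\Bn}$ and matching it against the allowable $C^2$-perturbation — to be the delicate point, and it is also the step that decisively uses the ball geometry, consistently with the failure of the analogous statement on the polydisc noted by the authors.
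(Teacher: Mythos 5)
Your proposal is correct and follows essentially the same strategy as the paper's proof of Theorem~\ref{introt3}: convexity of the image gives the Runge property, Anders\'en--Lempert supplies automorphism approximants, and the strict second-order characterization of convexity (as in \cite[Theorem~6.3.4]{GK}) is used with a positivity margin on a compact sub-ball to transfer convexity to rescaled approximants $z\mapsto\frac{1}{r_m}\psi_{k_m}(r_mz)$. The only cosmetic difference is that you shrink $f$ first (invoking Study-type shrinkage so that $f_r$ is convex on $\rho\mathbb{B}^n$) and then approximate on the enlarged ball, whereas the paper approximates $f$ directly on $\mathbb{B}^n$ and only afterwards rescales the automorphisms, obtaining the same margin via a maximum over $\{\|z\|\le r_m\}$.
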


\begin{theorem}
\label{t0.1}
Let $A\in L(\Cn)$ be a nonresonant operator with $m(A)>0$. If $n\ge 2$, then
$$\ov{S^0_A(\Bn)}=\ov{S^0_A(\Bn)\cap \mathcal{A}(\Bn)}$$ and
$$\widetilde{\mathcal{R}}_T({\rm id}_{\Bn},\na)=\ov{\widetilde{\mathcal{R}}_T({\rm id}_{\Bn},\na)
\cap \mathcal{A}(\Bn)},\quad \forall\, T\in (0,\infty).$$
\end{theorem}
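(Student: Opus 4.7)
The strategy is to establish the reachable-family identity first and then to derive the $S^0_A$ identity from it, using Theorem~\ref{introt2} as the fundamental building block.

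Fix $\tilde f\in\widetilde{\mathcal{R}}_T(\mathrm{id}_{\Bn},\na)$, so that $\tilde f(z)=e^{TA}v(z,0,T)$ where $v$ is the transition mapping of the Loewner PDE driven by some Herglotz field $h\in\na$. I would first reduce to piecewise constant fields: partition $[0,T]$ into subintervals $[t_{k-1},t_k)$ of lengths $\tau_k$, replace $h$ on each piece by a constant $h_k\in\na$, and let $v^{(N)}$ denote the corresponding transition mapping. A Gronwall-type continuous-dependence argument for the Loewner ODE shows $v^{(N)}(\cdot,0,T)\to v(\cdot,0,T)$ locally uniformly on $\Bn$ as the mesh refines. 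The semigroup property then gives the factorization $v^{(N)}(\cdot,0,T)=u_N\circ\cdots\circ u_1$, where $u_k(z)=\phi_k^{-1}(e^{-\tau_k A}\phi_k(z))$ is the time-$\tau_k$ flow of the autonomous field $-h_k$, and $\phi_k\in\widehat{S}_A(\Bn)$ is the unique (by nonresonance and $m(A)>0$) normalized spirallike mapping with respect to $A$ satisfying $[D\phi_k]^{-1}A\phi_k=h_k$.

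Applying Theorem~\ref{introt2}, each $\phi_k$ is a locally uniform limit of $\tilde\phi_k\in\widehat{S}_A(\Bn)\cap\mathcal{A}(\Bn)$. Set $\tilde u_k(z):=\tilde\phi_k^{-1}(e^{-\tau_k A}\tilde\phi_k(z))$; because $\tilde\phi_k$ extends to an automorphism of $\Cn$, so does $\tilde u_k$, and its restriction to $\Bn$ is the time-$\tau_k$ flow of the Herglotz field $\tilde h_k:=[D\tilde\phi_k]^{-1}A\tilde\phi_k\in\na$. Thus $\tilde g:=e^{TA}(\tilde u_N\circ\cdots\circ\tilde u_1)$ extends to an automorphism of $\Cn$ and simultaneously lies in $\widetilde{\mathcal{R}}_T(\mathrm{id}_{\Bn},\na)$, being generated by the piecewise constant Herglotz field assembled from the $\tilde h_k$. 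The convergence $\tilde\phi_k\to\phi_k$ propagates through biholomorphic inversion to $\tilde u_k\to u_k$, and then via composition to $\tilde g\to e^{TA}v^{(N)}(\cdot,0,T)$ locally uniformly, so that $\tilde g$ approximates $\tilde f$.

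To obtain the first identity, use the inclusion $\widetilde{\mathcal{R}}_T(\mathrm{id}_{\Bn},\na)\subset S^0_A(\Bn)$: extending the defining Herglotz field past $T$ by the constant field $Az\in\na$ produces an $A$-normal Loewner chain with $f_0=e^{TA}v(\cdot,0,T)$, since the chain freezes (i.e., $e^{-tA}f_t\equiv\mathrm{id}$) for $t\geq T$. For any $f\in S^0_A(\Bn)$ with $A$-normal Loewner chain $(f_t)$, the identity $f(z)=f_T(v(z,0,T))$, the convergence $e^{-tA}f_t\to\mathrm{id}$ locally uniformly, and the exponential shrinkage $\|v(z,0,T)\|\leq C(K)e^{-m(A)T}$ on compact $K\subset\Bn$ together yield $e^{TA}v(\cdot,0,T)\to f$ locally uniformly as $T\to\infty$. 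Applying the preceding reachable-family step to each $e^{TA}v(\cdot,0,T)$ then produces approximating automorphisms in $S^0_A(\Bn)\cap\mathcal{A}(\Bn)$, placing $f$ in the required closure. The main obstacle is the continuous-dependence machinery for the Loewner PDE on its generator, invoked twice: once to reduce from general Herglotz fields to piecewise constant ones, and once to turn $\tilde\phi_k\to\phi_k$ locally uniformly into $\tilde u_k\to u_k$ locally uniformly. The latter is delicate because $\tilde u_k$ involves $\tilde\phi_k^{-1}$ evaluated at $e^{-\tau_k A}\tilde\phi_k(z)$, which requires Hurwitz-type arguments to ensure that the relevant images remain within a common compact subset of the target on which biholomorphic inverses converge. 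The nonresonance hypothesis on $A$ is essential for the bijective correspondence between constant Herglotz fields in $\na$ and normalized spirallike mappings in $\widehat{S}_A(\Bn)$ that underlies the entire reduction to Theorem~\ref{introt2}.
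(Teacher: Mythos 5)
Your proposal is correct and follows essentially the same route as the paper's own proof (Theorems \ref{p2} and \ref{p3}): reduce the Herglotz field to a piecewise constant one, use Remark \ref{spir2} and Theorem \ref{introt2} to replace each constant value $h_k=(D\phi_k)^{-1}A\phi_k$ by one generated by an automorphism-spirallike $\tilde\phi_k$, observe via the flow factorization $v^{(N)}(\cdot,0,T)=u_N\circ\cdots\circ u_1$ with $u_k=\phi_k^{-1}\circ e^{-\tau_kA}\circ\phi_k$ that the resulting reachable-family element is the restriction of an automorphism, and then derive the $S^0_A$ statement from the reachable-family one by letting $T\to\infty$. Your explicit flow factorization fleshes out the step the paper dismisses as ``easily proved,'' and the paper invokes \cite[Lemma 4.12]{GHKK13} where you gesture at a Gronwall argument, but the strategy and the key ingredients are identical.
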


\begin{theorem}
\label{t0.2}
If $n\geq 2$, then
$${\mathcal Q}(\B^n)=\ov{{\mathcal Q}(\B^n)\cap\mathcal{A}(\Bn)}\, \mbox{ and }\,
\widetilde{\mathcal Q}(\B^n)=\ov{\widetilde{\mathcal Q}(\B^n)\cap \mathcal{A}(\Bn)}.$$
\end{theorem}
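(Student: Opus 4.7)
The plan is to deduce Theorem \ref{t0.2} from Theorem \ref{t0.1} applied with $A = I_n$ (which is nonresonant and satisfies $m(I_n)=1>0$), combined with a double-dilation argument that exploits the openness of the defining condition. Let $f \in \mathcal{Q}(\Bn) \subset S^0_{I_n}(\Bn)$, and for $r \in (0,1)$ put $f_r(z) := \frac{1}{r} f(rz)$. Since $Df_r(z) = Df(rz)$, compactness of $\ov{r\Bn}$ yields
$$c(r) := \sup_{z \in \Bn} \|Df_r(z) - I_n\| = \sup_{\|w\| \leq r} \|Df(w) - I_n\| < 1,$$
so $f_r \in \mathcal{Q}(\Bn)$ with a \emph{uniform} derivative estimate, and $f_r \to f$ locally uniformly on $\Bn$ as $r \to 1^-$.

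Fix such an $r$. By Theorem \ref{t0.1}, there exist automorphisms $\Phi_k$ of $\Cn$ with $\Phi_k|_{\Bn} \in S^0_{I_n}(\Bn)$ and $\Phi_k \to f_r$ locally uniformly on $\Bn$; Cauchy's integral formula upgrades this to $D\Phi_k \to Df_r$ locally uniformly. For $s \in (0,1)$ define $\Psi_k^{(s)}(z) := \frac{1}{s}\Phi_k(sz)$, which is again an automorphism of $\Cn$ with $\Psi_k^{(s)}(0)=0$ and $D\Psi_k^{(s)}(0)=I_n$. Since $D\Psi_k^{(s)}(z) = D\Phi_k(sz)$, uniform convergence on $\ov{s\Bn}$ gives, as $k \to \infty$,
$$\sup_{z \in \Bn} \|D\Psi_k^{(s)}(z) - I_n\| = \sup_{\|w\|\leq s}\|D\Phi_k(w) - I_n\| \longrightarrow \sup_{\|w\|\leq s}\|Df_r(w) - I_n\| \leq c(r) < 1,$$
so for $k$ large enough, $\Psi_k^{(s)}|_{\Bn}$ belongs to $\mathcal{Q}(\Bn)$.

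A diagonal argument now finishes the first identity. Given a compact $K \subset \Bn$ and $\varepsilon > 0$: pick $r$ close to $1$ so that $\|f_r - f\|_K < \varepsilon/3$; then pick $s$ close to $1$ so that $\|(f_r)_s - f_r\|_K < \varepsilon/3$, where $(f_r)_s(z) := \frac{1}{s}f_r(sz)$; finally pick $k$ so large that $\|\Psi_k^{(s)} - (f_r)_s\|_K < \varepsilon/3$ and the sup-bound above is $<1$. The resulting $\Psi_k^{(s)}$ is an automorphism of $\Cn$ with restriction in $\mathcal{Q}(\Bn)$ that lies within $\varepsilon$ of $f$ on $K$.

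The main obstacle is the gap between local uniform convergence of the Jacobians (free from Cauchy estimates) and the \emph{global} strict inequality $\|Df - I_n\|<1$ on all of $\Bn$, a condition which can degenerate near the boundary. The inner dilation $z \mapsto sz$ is exactly the device that confines the critical supremum to a compact subset of $\Bn$, where strict inequality is stable under uniform convergence. The identity for $\widetilde{\mathcal{Q}}(\Bn)$ is handled by the same scheme, its defining condition being likewise open and preserved under the dilations $f \mapsto f_r$.
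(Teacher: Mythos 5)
For the family ${\mathcal Q}(\Bn)$ your argument is correct, but it is a bit more elaborate than necessary. The paper establishes $\|Df(z)-I_n\|\le\|z\|$ via Schwarz's lemma applied to $z\mapsto Df(z)-I_n$ (which vanishes at the origin and has norm $<1$), so that a \emph{single} inner dilation $\varphi_m(z)=\frac1{r_m}\psi_{k_m}(r_mz)$, with $\psi_k\to f$ coming from Andersén--Lempert, already gives $\|D\varphi_m(z)-I_n\|\le\frac{1+r_m}2<1$ on all of $\Bn$; no second dilation or diagonal argument is needed. Your approach replaces the sharp Schwarz bound by the weaker compactness fact $c(r)<1$ and compensates with a second dilation parameter $s$. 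This buys nothing here but costs little; both work. (Also, invoking Theorem~\ref{t0.1} is overkill: you only need a sequence of automorphisms converging to $f_r$, which follows from ${\mathcal Q}(\Bn)\subseteq S^0(\Bn)\subseteq S_R(\Bn)$ and Andersén--Lempert, without the extra claim that $\Phi_k|_{\Bn}\in S^0(\Bn)$.)

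The treatment of $\widetilde{\mathcal Q}(\Bn)$, however, has a genuine gap. The defining condition $\sum_{k\ge2}k\|A_k\|\le 1$ is \emph{not} open in the topology of locally uniform convergence: local uniform convergence controls each Taylor coefficient (via Cauchy estimates) but gives no a priori control of the infinite series $\sum k\|A_k\|$, whose tail can behave arbitrarily badly for nearby mappings. Your dilation $f\mapsto f_r$ does yield $\sum_{k\ge2}k\,r^{k-1}\|A_k\|\le r<1$, so the slack on the model mapping is there; the problem is propagating it to the approximants. To make your second-dilation scheme work you must insert a Cauchy-estimate step on a ball strictly between $r_m\Bn$ and $\Bn$, e.g.\ of radius $\frac{1+r_m}{2}$, to get $\|B_k-r_m^{k-1}A_k\|\le\frac{\varepsilon_m}{r_m}\bigl(\frac{2r_m}{1+r_m}\bigr)^k$ with geometric decay in $k$, so that $\sum_{k\ge2}k\|B_k-r_m^{k-1}A_k\|$ can be made small. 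The paper does precisely this; without it the assertion that the $\widetilde{\mathcal Q}$ case ``is handled by the same scheme'' is unjustified, and the remark that the condition is ``likewise open'' is simply false.
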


\section{Preliminaries}
\label{prelim}
\setcounter{equation}{0}
Let $\mathbb{C}^n$ be the space of $n$ complex
variables $z=(z_1,\dots,z_n)$ with the Euclidean inner product
$\langle z,w\rangle=\sum_{j=1}^n z_j\overline{w}_j$ and the
Euclidean norm $\|z\|=\langle z,z\rangle^{1/2}$. Also, let $\Bn$ be the
Euclidean unit ball in $\C^n$ and let $\B^1=\U$ be the unit disc.
Let $L(\C^n)$ be the space of linear operators from $\C^n$ into $\C^n$
with the standard operator norm, and let $I_n$ be the identity in $L(\C^n)$.

We denote by $H(\B^n)$ the family
of holomorphic mappings from $\B^n$ into $\mathbb{C}^n$ with the standard topology
of locally uniform convergence.
If $f\in H(\B^n)$, we say that $f$ is
normalized if $f(0)=0$ and $Df(0)=I_n$. Let ${\mathcal L}S(\Bn)$ be the subset
of $H(\Bn)$ consisting of all normalized locally biholomorphic mappings on $\Bn$, and
let $S(\B^n)$ be the subset of $H(\Bn)$ consisting
of all normalized univalent (biholomorphic) mappings on $\B^n$. Also, let
$S^*(\Bn)$ be the subset of $S(\Bn)$ consisting of starlike mappings with respect
to the origin, and let $K(\Bn)$ be the family of normalized univalent mappings
which are convex on $\Bn$.

We use the following notations related to an operator $A\in L(\Cn)$ (cf. \cite{RS}):
\begin{eqnarray*}
m(A)&=&\min\{\Re\langle A(z),z\rangle:\, \|z\|=1\},\\
k(A)&=&\max\{\Re\langle A(z),z\rangle:\, \|z\|=1\},\\
|V(A)|&=&\max\{|\langle A(z),z\rangle|:\, \|z\|=1\},\\
k_+(A)&=&\max\{\Re\lambda:\, \lambda\in \sigma(A)\},
\end{eqnarray*}
where $\sigma(A)$ is the spectrum of $A$. Note that $|V(A)|$ is the
numerical radius of the operator $A$
and $k_+(A)$ is the upper exponential index (Lyapunov index) of $A$. The
following relations hold (see e.g. \cite{GHKK-JAM}):
$$m(A)\leq k_+(A)\leq |V(A)|\leq \|A\|.$$
Also, it is known that $\|A\|\leq 2|V(A)|$
and $k_+(A)=\lim_{t\to\infty}\frac{\log\|e^{tA}\|}{t}$ (see e.g. \cite{RS}).

If $A\in L(\Cn)$ with $m(A)>0$, then
(\cite[Lemma 2.1]{DGHK}; see also \cite{GHKK-JAM}):
\begin{equation}
\label{growth-exp}
e^{m(A)t}\leq \|e^{tA}u\|\leq e^{k(A)t},\quad t\geq 0,\quad \|u\|=1.
\end{equation}

We consider the following notations (see \cite{ABW-Proc}):
$${\rm Aut}(\Cn)=\big\{\Phi:\Cn\to\Cn:\,\Phi\mbox{ is  an automorphism of }\Cn\big\},$$
$$\mathcal{A}(\Bn)=\big\{\Phi\big|_{\Bn}:\,\Phi\in {\rm Aut}(\Cn)\big\},$$
$$S_R(\Bn)=\big\{f\in S(\Bn):\,f(\Bn)\mbox{ is Runge}\big\}.$$

Let $A\in L(\Cn)$ be such that $m(A)\ge0$.
The following subsets of $H(\B^n)$ are generalizations to
$\C^n$ of the Carath\'eodory family on $\U$ (see e.g. \cite{Suf}):
$$\mathcal{N}_A=\big\{h\in H(\B^n):h(0)=0,\, Dh(0)=A,\,\Re\langle h(z),z\rangle\geq 0,\, z\in \Bn\big\},$$
and let ${\mathcal M}={\mathcal N}_{I_n}$.
These families play basic roles in the study
of Loewner chains and the associated Loewner differential equations in higher dimensions
(see \cite{GHK02}, \cite{GHKK-JAM}, \cite{GHKK13}, \cite{H11}, \cite{Ha2},
\cite{I1}, \cite{Pf}, \cite{Por1}, \cite{Roth-pontryagin},
\cite{S2}, \cite{Suf}, \cite{Vo}).

The following result will be useful in the next section. Note that the estimates
(\ref{growth1}) are due to Pfaltzgraff \cite{Pf} and Gurganus \cite{Gur}, while the growth
result (\ref{growth2}) was obtained in \cite{GHKK-JAM} (see also \cite{GHK02}).

\begin{proposition}
\label{NA-growth}
Let $A\in L(\Cn)$ be such that $m(A)\ge0$ and let $h\in\mathcal{N}_A$. Then the
following relations hold:
\begin{equation}
\label{growth1}
m(A)\|z\|^2\frac{1-\|z\|}{1+\|z\|}\le\Re\left\langle h(z),z\right\rangle
\le k(A)\|z\|^2\frac{1+\|z\|}{1-\|z\|},\quad z\in\Bn,
\end{equation}
and
\begin{equation}
\label{growth2}
\|h(z)\|\le\frac{4\|z\|}{(1-\|z\|)^2}|V(A)|,\quad z\in\Bn.
\end{equation}
\end{proposition}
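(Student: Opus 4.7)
The plan is to reduce both estimates to classical one-variable Carath\'eodory inequalities by slicing $h$ along the complex line through $0$ in direction $u = z/\|z\|$. For fixed $z \in \Bn \setminus \{0\}$, I would define the scalar function
$$p(\zeta) = \frac{\langle h(\zeta u), u\rangle}{\zeta} \quad (\zeta \in \U \setminus \{0\}), \qquad p(0) = \langle Au, u\rangle.$$
The hypothesis $h \in \mathcal{N}_A$ immediately gives $|\zeta|^2 \Re p(\zeta) = \Re \langle h(\zeta u), \zeta u\rangle \geq 0$, so $p$ is holomorphic with $\Re p \geq 0$ on $\U$; since $\|u\| = 1$, one also has $m(A) \leq \Re p(0) \leq k(A)$ and $|p(0)| = |\langle Au, u\rangle| \leq |V(A)|$.

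For \eqref{growth1} I would invoke the Herglotz--Harnack estimate for functions with non-negative real part,
$$\Re p(0) \cdot \frac{1-|\zeta|}{1+|\zeta|} \leq \Re p(\zeta) \leq \Re p(0) \cdot \frac{1+|\zeta|}{1-|\zeta|}, \quad \zeta \in \U,$$
evaluate at $\zeta = \|z\|$, multiply by $\|z\|^2$, and use $\|z\|^2 \Re p(\|z\|) = \Re \langle h(z), z\rangle$ together with the bounds on $\Re p(0)$ to conclude. For \eqref{growth2} I would first apply Schwarz's lemma to $(p - p(0))/(p + \overline{p(0)})$ to obtain $|p(\zeta)| \leq |p(0)|(1+|\zeta|)/(1-|\zeta|) \leq |V(A)|(1+|\zeta|)/(1-|\zeta|)$, which at $\zeta = \|z\|$ controls only the component $|\langle h(z), u\rangle|$ along the radial direction. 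To upgrade this to the full norm $\|h(z)\|$, I would restrict $h$ to the complex two-plane $\operatorname{span}_{\C}\{u, w\}$ for each unit $w \perp u$: the induced $\C^2$-valued mapping on $\B^2$ lies in a Carath\'eodory class analogous to $\mathcal{N}_A$ whose symbol is the compression $A'$ of $A$, satisfying $|V(A')| \leq |V(A)|$, and a two-variable version of the scalar bound combined with the identity $\|h(z)\|^2 = \sum_v |\langle h(z), v\rangle|^2$ over an orthonormal basis then yields \eqref{growth2}.

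I expect the main obstacle to be this final step. The Carath\'eodory hypothesis supplies only one scalar inequality at each point of $\Bn$ and does not directly bound the components of $h(z)$ orthogonal to $z$, so upgrading the one-dimensional Schwarz--Pick estimate to a genuine vector-norm bound requires a more delicate two-variable argument (or, alternatively, a vector-valued Cauchy-integral estimate applied to the $\Cn$-valued slice function $\zeta \mapsto h(\zeta u)/\zeta$). This additional work is precisely what produces the squared denominator $(1 - \|z\|)^2$ and the constant $4$ in \eqref{growth2}, making that bound strictly weaker than the scalar Schwarz--Pick bound on $|\langle h(z), u\rangle|$.
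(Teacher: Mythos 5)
The paper does not give its own proof of Proposition \ref{NA-growth}; it cites Pfaltzgraff and Gurganus for (\ref{growth1}) and Graham--Hamada--Kohr--Kohr (together with Graham--Hamada--Kohr) for (\ref{growth2}). Your slicing argument for (\ref{growth1}) is correct and is essentially the standard one: $p(\zeta)=\langle h(\zeta u),u\rangle/\zeta$ has nonnegative real part with $p(0)=\langle Au,u\rangle$, $\|u\|=1$ gives $m(A)\le\Re p(0)\le k(A)$, and the Harnack inequality at $\zeta=\|z\|$, scaled by $\|z\|^2$, yields exactly (\ref{growth1}).

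For (\ref{growth2}) there is a genuine gap, which you have correctly located but not closed. Your one-variable bound $|p(\zeta)|\le|p(0)|\frac{1+|\zeta|}{1-|\zeta|}$ is valid (from $|p(\zeta)-p(0)|\le|\zeta|\,|p(\zeta)+\overline{p(0)}|$), but it controls only the radial component $|\langle h(z),u\rangle|$. The Carath\'eodory condition $\Re\langle h(z),z\rangle\ge0$ is a single scalar inequality at each point and says nothing directly about the components of $h(z)$ orthogonal to $z$. The two-plane reduction does not escape this: the restriction of $h$ to $\mathrm{span}_{\mathbb{C}}\{u,w\}$ does land in $\mathcal{N}_{A'}$ on $\mathbb{B}^2$, but that is the same problem again in dimension two, and the ``two-variable version of the scalar bound'' you would then invoke is precisely the statement you are trying to prove, so the reduction is circular. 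The vector-valued Cauchy estimate for $\zeta\mapsto h(\zeta u)/\zeta$ is equally circular, since it presupposes a sup bound on $\|h(\zeta u)\|$ over circles. What is actually needed --- and what the cited references supply --- is a separate estimate relating the norm of each homogeneous polynomial term $A_m$ in the Taylor expansion $h(z)=Az+\sum_{m\ge2}A_m(z)$ to its numerical radius $\sup_{\|u\|=1}|\langle A_m(u),u\rangle|$, the latter being controlled by the Carath\'eodory coefficient bound $|\langle A_m(u),u\rangle|\le 2\Re\langle Au,u\rangle\le 2|V(A)|$ applied to $p$; summing the resulting series produces the $(1-\|z\|)^{-2}$ in (\ref{growth2}). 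Your proposal contains no substitute for this polynomial norm--numerical-radius step.
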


A direct consequence of (\ref{growth2}) is the following compactness result
of the family ${\mathcal N}_A$ (see \cite{GHKK-JAM}; cf. \cite{GHK02}).

\begin{corollary}
\label{c.compact}
Let $A\in L(\Cn)$ be such that $m(A)\ge0$ and let $h\in\mathcal{N}_A$.
Then $\mathcal{N}_A$ is compact subset of $H(\Bn)$.
\end{corollary}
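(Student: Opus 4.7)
The plan is to show that $\mathcal{N}_A$ is both a normal family and a closed subset of $H(\B^n)$; together these give sequential compactness in the topology of locally uniform convergence, which coincides with compactness here since $H(\B^n)$ is metrizable.

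First, I would establish normality via Montel's theorem using the growth bound (\ref{growth2}) from Proposition \ref{NA-growth}. For any compact set $K \subset \B^n$, let $r = \max_{z\in K}\|z\| < 1$. Then every $h \in \mathcal{N}_A$ satisfies
$$\|h(z)\| \le \frac{4r}{(1-r)^2}|V(A)|,\qquad z\in K,$$
so $\mathcal{N}_A$ is locally uniformly bounded. By Montel's theorem in several complex variables, $\mathcal{N}_A$ is a normal family: every sequence has a subsequence converging locally uniformly on $\B^n$ to some $h \in H(\B^n)$.

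Second, I would verify that $\mathcal{N}_A$ is closed. Take any sequence $\{h_k\} \subset \mathcal{N}_A$ with $h_k \to h$ locally uniformly. Evaluating at $z=0$ gives $h(0) = \lim_k h_k(0) = 0$. By the Weierstrass-type theorem on convergence of derivatives under locally uniform convergence of holomorphic mappings, $Dh(0) = \lim_k Dh_k(0) = A$. Finally, for each fixed $z \in \B^n$, continuity of the inner product yields
$$\Re\langle h(z),z\rangle = \lim_{k\to\infty}\Re\langle h_k(z),z\rangle \ge 0,$$
so $h \in \mathcal{N}_A$.

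There is no real obstacle in this argument; it is the standard Montel/Weierstrass machinery combined with the a priori bound from Proposition \ref{NA-growth}. The only point that deserves attention is the passage $Dh_k(0)\to Dh(0)$, which is a consequence of locally uniform convergence of holomorphic mappings together with the Cauchy integral formula for derivatives applied on a small closed polydisc centered at the origin inside $\B^n$.
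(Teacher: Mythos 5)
Your proof is correct and follows the same route the paper intends: the paper simply records the compactness as a direct consequence of the growth estimate (\ref{growth2}), citing the references \cite{GHKK-JAM} and \cite{GHK02} for details, and your argument (local uniform boundedness from (\ref{growth2}) giving normality by Montel, then closedness of $\mathcal{N}_A$ by passing the three defining conditions to the limit) is exactly the standard elaboration of that claim. No gaps; the step $Dh_k(0)\to Dh(0)$ via the Cauchy integral formula is handled appropriately.
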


Next, we recall the definition of spirallikeness with respect to a given operator $A\in L(\Cn)$
with $m(A)>0$ (see \cite{Suf}).

\begin{definition}
\label{defin-spiral}
Let $A\in L(\Cn)$ be such that $m(A)>0$.
A mapping $f\in S(\Bn)$ is said to be spirallike with respect to $A$
(denoted by $f\in \widehat{S}_A(\Bn)$)
if $f(\Bn)$ is a spirallike domain
with respect to $A$,
i.e. $e^{-tA}f(\Bn)\subseteq f(\Bn)$, for all $t\ge0$.
\end{definition}

The following result due to Suffridge \cite{Suf} (cf. \cite{Gur}) provides a necessary and
sufficient condition of spirallikeness for locally biholomorphic mappings
on $\Bn$.

\begin{proposition}
\label{spiral}
Let $A\in L(\Cn)$ be such that $m(A)>0$, and let $f\in {\mathcal L}S(\Bn)$.
Then $f\in \widehat{S}_A(\Bn)$ iff
there exists $h\in\mathcal{N}_A$ such that $Df(z)h(z)=Af(z)$, $z\in\Bn$.
\end{proposition}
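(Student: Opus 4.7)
The plan is to translate the spirallikeness condition into a statement about a holomorphic semigroup of self-maps of $\Bn$, obtained by conjugating the linear flow $(e^{-tA})_{t\ge 0}$ with $f$, and then to read off $h$ as (minus) the infinitesimal generator at $t=0$.

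For the forward implication, spirallikeness guarantees that $v_t(z):=f^{-1}(e^{-tA}f(z))$ is well defined for every $z\in\Bn$ and $t\ge 0$, yielding a jointly real-analytic semigroup of holomorphic self-maps of $\Bn$ with $v_0=\mathrm{id}_{\Bn}$ and $v_t(0)=0$. Setting $h(z):=-\p_t v_t(z)\big|_{t=0}$, differentiating the identity $f(v_t(z))=e^{-tA}f(z)$ at $t=0$ yields $Df(z)h(z)=Af(z)$; differentiating $Dv_t(0)=e^{-tA}$ (obtained by applying $D$ to $f\circ v_t(0)=0$) at $t=0$ gives $Dh(0)=A$, and $h(0)=0$ is immediate. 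The classical Schwarz lemma applied to each $v_t$ (which fixes the origin) gives $\|v_t(z)\|^2\le \|z\|^2$, so the right derivative at $t=0$ of the smooth function $t\mapsto \langle v_t(z),v_t(z)\rangle$, equal to $-2\Re\langle h(z),z\rangle$, must be nonpositive, and hence $h\in\na$.

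For the reverse implication, given $h\in\na$ with $Df(z)h(z)=Af(z)$, I would solve the initial value problem $\p_t v_t(z)=-h(v_t(z))$, $v_0(z)=z$. The condition $\Re\langle h(w),w\rangle\ge 0$ forces $t\mapsto \|v_t(z)\|^2$ to be nonincreasing, so the local solution stays inside $\Bn$ and extends to a global flow on $[0,\infty)\times\Bn$. The relation $Df(z)h(z)=Af(z)$ then turns the ODE into $\p_t f(v_t(z))=-Af(v_t(z))$, whose unique solution with value $f(z)$ at $t=0$ is $f(v_t(z))=e^{-tA}f(z)$. This exhibits $e^{-tA}f(z)$ as an element of $f(\Bn)$ for every $z\in\Bn$ and $t\ge 0$, which is exactly spirallikeness.

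I expect the main technical point to be the verification $\Re\langle h(z),z\rangle\ge 0$ in the forward direction: one must justify the right-derivative computation at $t=0$ and cope with the non-smoothness of $\|\cdot\|$ at the origin. Working with the squared norm $\|v_t(z)\|^2=\langle v_t(z),v_t(z)\rangle$ instead of $\|v_t(z)\|$ removes the second issue, and the joint real-analyticity of $(t,z)\mapsto v_t(z)$ removes the first.
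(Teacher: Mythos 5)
The paper does not actually prove Proposition \ref{spiral}; it quotes it from Suffridge and Gurganus. Judged on its own, your forward implication is correct and is the standard argument: the semigroup $v_t=f^{-1}\circ(e^{-tA}f)$ is well defined by spirallikeness, $h(z)=-\p_t v_t(z)|_{t=0}=[Df(z)]^{-1}Af(z)$ is holomorphic with $h(0)=0$, $Dh(0)=A$, and the Schwarz lemma plus the right derivative of $t\mapsto\|v_t(z)\|^2$ gives $\Re\langle h(z),z\rangle\ge 0$. The reverse implication, however, has a genuine gap: you establish only the inclusion $e^{-tA}f(\Bn)\subseteq f(\Bn)$, but $\widehat{S}_A(\Bn)$ is by definition a subset of $S(\Bn)$, while the hypothesis gives merely $f\in{\mathcal L}S(\Bn)$. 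Proving that the existence of $h\in\na$ with $Df(z)h(z)=Af(z)$ forces $f$ to be \emph{univalent} is the substantive content of the sufficiency half of Suffridge's theorem --- these results are univalence criteria first and foremost --- and your argument never addresses it.

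The gap can be closed along the lines you have already set up. From the lower estimate (\ref{growth1}) of Proposition \ref{NA-growth} and $\|v_t(z)\|\le\|z\|$ one gets
$$\frac{d}{dt}\|v_t(z)\|^2=-2\Re\langle h(v_t(z)),v_t(z)\rangle\le -2m(A)\frac{1-\|z\|}{1+\|z\|}\,\|v_t(z)\|^2,$$
so $v_t(z)\to 0$ exponentially fast, locally uniformly. If $f(z_1)=f(z_2)$, then $f(v_t(z_1))=e^{-tA}f(z_1)=e^{-tA}f(z_2)=f(v_t(z_2))$ for all $t\ge 0$; for $t$ large both $v_t(z_1)$ and $v_t(z_2)$ lie in a neighborhood of the origin on which the locally biholomorphic $f$ is injective, whence $v_t(z_1)=v_t(z_2)$, and backward uniqueness for the autonomous ODE $\p_t v=-h(v)$ (with $h$ locally Lipschitz) yields $z_1=z_2$. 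Alternatively, one may note that $L(z,t)=e^{tA}f(z)$ satisfies the Loewner PDE $\p_tL=DL\cdot h$ with $e^{-tA}L(\cdot,t)=f$ locally bounded, and invoke the univalence theorems for such chains (Pfaltzgraff, or \cite{GHKK-JAM}). Either way, this step must appear explicitly for the proof to be complete.
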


Hamada \cite{Ha2} proved the following property of spirallike mappings in $\Cn$
(see \cite{Ka}, in the case $A=I_n$).

\begin{proposition}
\label{p.spiral-runge}
Let $A\in L(\Cn)$ be such that $m(A)>0$. Then $\widehat{S}_A(\Bn)\subseteq S_R(\Bn)$.
\end{proposition}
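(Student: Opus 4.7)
The plan is to embed $f$ into a Loewner chain on $\Bn$ whose ranges exhaust $\Cn$ and then appeal to the known fact that, for $n\ge 2$, such chains consist of maps with Runge range.

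First, I would set $f_t(z):=e^{tA}f(z)$ for $t\ge 0$ and $z\in\Bn$ and verify that $(f_t)_{t\ge 0}$ is a Loewner chain on $\Bn$. Writing $\Omega:=f(\Bn)$, each $f_t$ is biholomorphic with $f_t(0)=0$ and $Df_t(0)=e^{tA}$. The spirallikeness $e^{-sA}\Omega\subseteq\Omega$ for $s\ge 0$ immediately gives the nesting $f_s(\Bn)=e^{sA}\Omega\subseteq e^{tA}\Omega=f_t(\Bn)$ for $0\le s\le t$, together with well-defined, holomorphic transition maps $v_{s,t}:=f_t^{-1}\circ f_s\colon\Bn\to\Bn$. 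Moreover the chain is normal, since $e^{-tA}f_t\equiv f$ is a constant family.

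Second, I would check that $\bigcup_{t\ge 0}f_t(\Bn)=\Cn$. Because $f$ is normalized, $\Omega$ contains a ball $\rho\Bn$ for some $\rho>0$. For $w\in\Cn$, the estimate (\ref{growth-exp}) implies the companion inequality $\|e^{-tA}w\|\le e^{-m(A)t}\|w\|$, which tends to $0$ as $t\to\infty$ because $m(A)>0$. So for $t$ large, $e^{-tA}w\in\rho\Bn\subseteq\Omega$, and therefore $w=e^{tA}(e^{-tA}w)\in e^{tA}\Omega=f_t(\Bn)$.

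Finally, I would invoke the (nontrivial) theorem that, in dimension $n\ge 2$, every Loewner chain on $\Bn$ with $\bigcup_t f_t(\Bn)=\Cn$ has Runge range at every time $t\ge 0$; applied at $t=0$ this yields $\Omega=f_0(\Bn)\in S_R(\Bn)$. The main obstacle lies precisely here: this Runge statement rests on the Andersén--Lempert theory and requires a simultaneous approximation of the transition maps $v_{s,t}$ by restrictions of elements of $\Aut(\Cn)$---none of the preceding steps is deep in itself. The role of the hypothesis $m(A)>0$ is only to force the contracting semigroup $\{e^{-tA}\}_{t\ge 0}$ to drive every point of $\Cn$ into the fixed ball $\rho\Bn\subseteq\Omega$ in finite time, which is exactly the exhaustion condition the Runge criterion demands.
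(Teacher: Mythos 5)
Your reduction is correct as far as it goes: for a spirallike $f$, the map $f_t=e^{tA}f$ is indeed an $A$-normalized univalent subordination chain, the nesting $e^{sA}f(\B^n)\subseteq e^{tA}f(\B^n)$ for $s\le t$ is exactly spirallikeness, and $\bigcup_{t\ge 0}e^{tA}f(\B^n)=\C^n$ because $\|e^{-tA}w\|\le e^{-m(A)t}\|w\|\to 0$ by (\ref{growth-exp}). In the language of the paper, you have shown $\widehat{S}_A(\B^n)\subseteq S^1_A(\B^n)$, and Proposition \ref{rs} records the further inclusion $S^1_A(\B^n)\subseteq S_R(\B^n)$, so combining the two does yield Proposition \ref{p.spiral-runge}. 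That part of the reasoning is sound.

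However, this is a relocation of the difficulty rather than an independent proof. The ``nontrivial theorem'' you invoke in step 3---that a univalent chain on $\B^n$ with Loewner range $\C^n$ has Runge ranges---is precisely the content of the inclusion $S^1_A(\B^n)\subseteq S_R(\B^n)$ in Proposition \ref{rs}, and the paper establishes that inclusion by citing the same source \cite{Ha2} (Theorem 5.1 there, together with \cite{Doc-Gra}) that it cites for Proposition \ref{p.spiral-runge} itself (Theorem 3.1 in \cite{Ha2}). You have traded one theorem of Hamada's for a neighboring theorem of comparable or greater depth in the same paper, and you would have to check that Theorem 5.1 of \cite{Ha2} does not itself rest on Theorem 3.1 to rule out circularity. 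Moreover, your diagnosis of where the difficulty lives is off: the Runge property of chain ranges is not obtained by Andersén--Lempert approximation of the transition maps $v_{s,t}$ by automorphisms. The cited ingredients are a Docquier--Grauert-type result (Satz 17--19 of \cite{Doc-Gra}) about increasing unions of Stein domains, together with Hamada's analysis in \cite{Ha2}. In this paper, Andersén--Lempert is used only afterwards, to upgrade ``$f(\B^n)$ is Runge'' to ``$f$ is locally uniformly approximable by automorphisms'' (as in the proofs of Theorems \ref{tspst} and \ref{convaut}); Runge-ness is the \emph{hypothesis} of that step, not its conclusion. So the plan is logically consistent but does not discharge the essential content, and mischaracterizes the machinery that does.
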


The following subset of $S^*(\Bn)$ will occur in a forthcoming section (see e.g. \cite{HH08}).

\begin{definition}
\label{d.gstar}
Let $g:\U\to\C$ be a univalent function with $g(0)=1$ and $\Re g(\zeta)>0$ for $\zeta\in\U$.
Also, let $f\in {\mathcal L}S(\Bn)$. We say that $f$ is $g$-starlike (denoted by $f\in S_g^*(\Bn)$)
if (see \cite{GHK02})
$$\frac{1}{\|z\|^2}\langle [Df(z)]^{-1}f(z),z\rangle\in g(\U),\quad z\in\Bn\setminus\{0\}.$$
\end{definition}

\begin{remark}
\label{r.sgstar}
(i) Clearly, if $\Re g(\zeta)>0$, $\zeta\in\U$, then
$S_g^*(\B^n)\subseteq S^*(\B^n)$, and if $g(\zeta)=\frac{1-\zeta}{1+\zeta}$,
$\zeta\in\U$, then $S_g^*(\B^n)=S^*(\B^n)$.

(ii) Let $\alpha\in (0,1)$. If $g(\zeta)=\frac{1-\zeta}{1+(1-2\alpha)\zeta}$, $\zeta\in\U$,
then the family $S_g^*(\B^n)$ is the usual family $S_\alpha^*(\B^n)$ of starlike mappings of
order $\alpha$, that is
$$S_\alpha^*(\B^n)=\Big\{f\in {\mathcal L}S(\B^n):\Big|\frac{1}{\|z\|^2}
\langle [Df(z)]^{-1}f(z),z\rangle-\frac{1}{2\alpha}\Big|<\frac{1}{2\alpha},
z\in\B^n\setminus\{0\}\Big\}.$$
It is known that $K(\Bn)\subseteq S_{1/2}^*(\Bn)$ (see
\cite{GHK02}). This inclusion relation provides a motivation for the study of the family
$S_g^*(\Bn)$.

(iii) Let $\alpha\in (0,1]$.
If $g(\zeta)=\left(\frac{1-\zeta}{1+\zeta}\right)^\alpha$, $\zeta\in\U$,
then the family $S_g^*(\B^n)$ is the family $SS^*_{\alpha}(\B^n)$
of strongly starlike mappings of order
$\alpha$ on $\B^n$ (see e.g. \cite{GK}, \cite{HH08}).

(iv) If $g:\U\to\C$ is a univalent function such that
$g(0)=1$ and $\Re g(\zeta)>0$ for $\zeta\in\U$, then
the family $S_g^*(\Bn)$ is compact, by
\cite[Theorem 2.17]{DGHK}.
\end{remark}

\begin{definition}
\label{Cm}
Let $J\subseteq [0,\infty)$ be an interval.
A mapping $h:\Bn\times J\to \Cn$ is called a Carath\'{e}odory mapping on $J$ with values in $\na$ if
the following conditions hold:
\begin{item}
$(i)$ $h(\cdot,t)\in\na$, for all $t\in J$.
\end{item}
\begin{item}
$(ii)$ $h(z,\cdot)$ is measurable on $J$, for all $z\in \Bn$.
\end{item}

Let $\mathcal{C}(J,\na)$ be the family of Carath\'{e}odory mappings on $J$ with values in $\na$.

A mapping $f\in {\mathcal C}([0,\infty),{\mathcal N}_A)$ is also called a
Herglotz vector field (cf. \cite{BCM}, \cite{DGHK}).
\end{definition}

Next, we recall the notion of an $A$-normalized subordination chain on $\Bn\times [0,\infty)$,
where $A\in L(\Cn)$ with $m(A)>0$
(see \cite{GHKK-JAM}; cf. \cite{Pf}).

\begin{definition}
\label{Ldef}
A mapping $f:\Bn\times [0,\infty)\to\Cn$ is called a subordination
chain if $f(\cdot,t)\in H(\Bn)$, $f(0,t)=0$, for $t\geq 0$, and for
all $t\geq s\geq 0$
there is a holomorphic Schwarz mapping $v_{s,t}:\Bn\to\Bn$, called
the transition mapping associated with $f$, such that
$f(z,s)=f(v_{s,t}(z),t)$ for $z\in\Bn$.

A subordination chain $f$ is said to be univalent
if $f(\cdot,t)$ is a univalent mapping on $\Bn$, for all $t\geq 0$.

A subordination chain $f$
is said to be $A$-normalized
if $Df(0,t)=e^{tA}$ for $t\ge0$, where $A\in L(\Cn)$ with $m(A)>0$.

If $f:\Bn\times [0,\infty)\to\Cn$ is a univalent subordination chain, we denote the set
$\bigcup_{t\ge0}f(\Bn,t)$ by $R(f)$, and we call it the Loewner range of $f$.
\end{definition}

\begin{definition}
\label{d.s1}
Let $A\in L(\Cn)$ be such that $m(A)>0$. Also,
let $S^1_A(\Bn)$ be the family of all mappings $f\in S(\Bn)$
for which there is an $A$-normalized univalent subordination chain $L$
such that $f=L(\cdot,0)$ and $R(L)=\Cn$. If $A=I_n$, the family $S_{I_n}^1(\Bn)$ is denoted by $S^1(\Bn)$.
\end{definition}

\begin{definition} (cf. \cite{BCM}, \cite{DGHK})
\label{Lsol}
Let $A\in L(\Cn)$ be such that $m(A)>0$ and let $h\in\mathcal{C}([0,\infty),\na)$. Let $f:\Bn\times[0,\infty)\to\Cn$
be such that $f(\cdot,t)\in H(\Bn)$, $f(0,t)=0$, for $t\ge0$, and $f(z,\cdot)$ is locally absolutely continuous on $[0,\infty)$
locally uniformly with respect to $z\in\Bn$. If $f$ satisfies the Loewner differential equation
\begin{equation}
\label{Leq}
\frac{\partial f}{\partial t}(z,t)=Df(z,t)h(z,t),\quad z\in\Bn, \mbox{ a.e. }t\ge0,
\end{equation}
then $f$ is called a standard solution of $(\ref{Leq})$ associated to $h$.
\end{definition}

\begin{remark} (see \cite{Ar}, \cite{ABHK}, \cite{MV}; cf. \cite{DGHK}, \cite{GK})
\label{Lpde}

$(i)$ Let $A\in L(\Cn)$ be such that $m(A)>0$. If $f$ is an $A$-normalized univalent subordination chain, then there exists $h\in\mathcal{C}([0,\infty),\na)$ such that $f$ is a standard solution of $(\ref{Leq})$ associated to $h$.

$(ii)$ Conversely, let $h\in\mathcal{C}([0,\infty),\na)$. Then there exists an $A$-normalized univalent subordination chain $f:\Bn\times[0,\infty)\to\Cn$ that  is a standard solution of $(\ref{Leq})$ associated to $h$.  Moreover, if $g:\Bn\times[0,\infty)\to\Cn$ is another standard solution of $(\ref{Leq})$ associated to $h$, then $g$ is a subordination chain and there exists a holomorphic mapping $\Phi: R(f)\to\Cn$ such that $g=\Phi\circ f$.
\end{remark}

\begin{proof}
$(i)$ By \cite[Proposition 1.3.6]{MV}, there exists $h:\Bn\times[0,\infty)\to\Cn$ such that $h(z,\cdot)$ is measurable on $[0,\infty)$, $h(0,t)=0$ and $\Re\left\langle h(z,t),z\right\rangle\ge0$, for $t\ge0$, $z\in\Bn$, and $f$ satisfies the Loewner differential equation
(\ref{Leq}) associated with $h$.
The fact that $Dh(0,t)=A$, for a.e. $t\ge0$, is obvious in view of $(\ref{Leq})$.

$(ii)$ The existence of an $A$-normalized univalent subordination chain $f:\Bn\times[0,\infty)\to\Cn$ that  is a standard solution of $(\ref{Leq})$ associated to $h$, follows from \cite[Theorem 1.6.11]{MV} (see also \cite{Ar},\cite{Vo}). Now, let $(v_{s,t})_{t\ge s\ge 0}$ be the family of transition mappings associated to $f$. We have that $(v_{s,t})_{t\ge s\ge 0}$ satisfies the initial value problem
\begin{equation}
\label{lode2}
\displaystyle\frac{\partial }{\partial t}v_{s,t}(z)=-h(v_{s,t}(z),t),\mbox{ a.e. }t\ge s,\quad v(z,s,s)=z,
\end{equation}
for all $s\ge0$ and $z\in\Bn$ (see \cite[Theorem 5.2]{ABHK}; cf. \cite[Corollary 8.1.10]{GK}). If $g:\Bn\times[0,\infty)\to\Cn$
is another standard solution of $(\ref{Leq})$ associated to $h$, then $g(v_{s,t}(z),t)=g(z,s)$,
for $z\in\Bn$, $t\ge s\ge 0$ (use $(\ref{Leq})$ and $(\ref{lode2})$; see the proofs of  \cite[Theorem 5.2]{ABHK},
\cite[Corollary 8.1.10]{GK}). Hence $g$ is a subordination chain that admits $(v_{s,t})_{t\ge s\ge 0}$ as transition mappings.
In view of \cite[Proposition 1.5.4]{MV} (see also \cite[Theorem 4.7]{ABHK}), there is a holomorphic mapping
$\Phi: R(f)\to\Cn$ such that $g=\Phi\circ f$, as desired.
 \end{proof}

\begin{definition} (see \cite{GHKK-JAM})
\label{SA-d}
Let $A\in L(\Cn)$ be such that $m(A)>0$. Also, let $f\in H(\Bn)$ be a normalized
mapping. We say that $f$ has $A$-parametric representation
if there exists a
mapping $h\in {\mathcal C}([0,\infty), {\mathcal N}_A)$ such that
\[
f=\lim_{t\to\infty}e^{tA} v(\cdot,t)
\]
locally uniformly on $\Bn$, where
$v(z,\cdot)$ is the unique locally absolutely
continuous solution on $[0,\infty)$ of the
initial value problem
\[
\frac{d v}{d t}=-h(v,t),\mbox{ a.e. }t\ge 0,\quad v(z,0)=z,
\]
for all $z\in \Bn$.

Let $S^0_A(\Bn)$ be the family of mappings which have $A$-parametric representation on $\Bn$.
In the case $A=I_n$, let $S^0(\Bn)=S^0_{I_n}(\Bn)$ be the family of mappings with the usual parametric
representation on $\Bn$.
\end{definition}

The following definition is related to resonances/nonresonances of an operator
$A\in L(\Cn)$ (see e.g. \cite[p. 180-181]{Arn}).

\begin{definition}
\label{reson}
Let $A\in L(\Cn)$ and $(\lambda_1,\ldots,\lambda_n)\in\Cn$ be the $n$-tuple of eigenvalues of
$A$ (not necessarily distinct or in a specific order).

If there are $s\in\{1,\ldots,n\}$ and $m_1,\ldots,m_n\in\mathbb{N}\cup\{0\}$ such that
$\sum_{j=1}^nm_j\ge 2$ and
$\lambda_s=\sum_{j=1}^nm_j\lambda_j$, then we say that $A$ is resonant.
Otherwise, we say that $A$ is nonresonant.

Similarly, if there are $s\in\{1,\ldots,n\}$ and $m_1,\ldots,m_n\in\mathbb{N}\cup\{0\}$ such
that $\sum_{j=1}^nm_j\ge 2$ and
$\Re\lambda_s=\sum_{j=1}^nm_j\Re\lambda_j$,
then we say that $A$ has real
resonances;
otherwise, we say that $A$ has no real resonances.
\end{definition}

\begin{remark}
\label{spir2}
(i) Let $A\in L(\Cn)$ be such that $m(A)>0$, and let $h\in\na$. Voda \cite[Remark 3.2]{Vo}
(cf. \cite[Corollary 4.8]{DGHK} and \cite[Sect. IV.2]{P})
proved that the equation
$$Df(z)h(z)=Af(z),\quad z\in\Bn,$$
has a unique normalized solution $f\in H(\Bn)$ if and only if $A$ is nonresonant.

(ii) In view of Remark \ref{spir2} (i), we deduce the following characterization of the
Carath\'eodory family $\na$, in the case that $A\in L(\Cn)$ is a nonresonant operator
with $m(A)>0$:
$$\na=\Big\{h\in H(\Bn):h(z)=\big(Df(z)\big)^{-1}Af(z),z\in\Bn,
\mbox{for some}f\in\widehat{S}_A(\Bn)\Big\}.$$
\end{remark}

Next, we point out some examples of nonresonant operators $A\in L(\C^2)$
such that $k_+(A)=2m(A)$ or the condition
$k_+(A)<2m(A)$ does not hold.

\begin{example}
\label{ex2r}
Let $A\in L(\C^2)$ be given by
$$\begin{matrix}
A=\left(\begin{array}{cc}
\lambda_1 & 0   \\
0 & \lambda_2 \\
\end{array}
\right)
\end{matrix},$$
where $\lambda_2\ge 2\lambda_1>0$ and $\frac{\lambda_2}{\lambda_1}\notin\mathbb{N}$.
Then $A$ is a nonresonant operator with $m(A)>0$
for which the condition $k_+(A)<2m(A)$ does not hold.
\end{example}

\begin{proof}
It suffices to check that $A$ is nonresonant, since the other conditions
are clearly satisfied by $A$. Suppose that $A$ is resonant. Then there exist $s\in\{1,2\}$
and $m_1,m_2\in\mathbb{N}\cup\{0\}$ such that $m_1+m_2\ge 2$ and
$\lambda_s=m_1\lambda_1+m_2\lambda_2$. Since $0<\lambda_1<\lambda_2$ and $m_1+m_2\ge 2$,
we must have $\lambda_2=m_1\lambda_1$, which is a contradiction
with $\frac{\lambda_2}{\lambda_1}\notin\mathbb{N}$. This completes the proof.
 \end{proof}

\begin{example}
\label{ex3r}
Let $A\in L(\C^2)$ be given by
$$\begin{matrix}
A=\left(\begin{array}{cc}
1-2\alpha & 1   \\
0 & \frac{1}{2}-2\alpha \\
\end{array}
\right)
\end{matrix},$$
where $\alpha=\min_{t\in[0,1]}\big(\frac{t}{2}-\sqrt{t(1-t)}\big)$.
Then $A$ is a nonresonant operator with $m(A)>0$ and
$k_+(A)=2m(A)$.
\end{example}

\begin{proof}
By elementary computations, we obtain that $\alpha=\frac{1-\sqrt{5}}{4}<0$. Also, it is not difficult to prove that $m(A)=\frac{1}{2}-\alpha>0$. Since $\sigma(A)=\{1-2\alpha, \frac{1}{2}-2\alpha\}$, we have $k_+(A)=1-2\alpha$ and thus $k_+(A)=2m(A)$. If $A$ is resonant, then we deduce
by the same arguments as in the proof of Example $\ref{ex2r}$
that there exists $m\in\mathbb{N}$ such that $m\ge2$ and $1-2\alpha=m(\frac{1}{2}-2\alpha)$.
 However, this is a contradiction.
 \end{proof}

\begin{remark}
\label{rrr}
$(i)$ Let $A\in L(\Cn)$ be a positive definite Hermitian matrix such that $k_+(A)\le 2m(A)$.
Then $m(A)>0$ and $A$ is resonant if and only if $k_+(A)=2m(A)$.

$(ii)$ Let $A\in L(\Cn)$ be such that $m(A)>0$ and $k_+(A)\le 2m(A)$.
Then $A$ has real resonances if and only if $k_+(A)=2m(A)$.
\end{remark}

\begin{proof}
$(i)$ Since $A$ is a positive definite Hermitian matrix, we have that $m(A)>0$
is the smallest eigenvalue of $A$ (see e.g. \cite{RS}). Since $k_+(A)$
is the largest eigenvalue of $A$, it follows easily that $A$ is resonant if and only if $k_+(A)=2m(A)$.

$(ii)$ Let $k_-(A)=\min\{\Re\lambda:\, \lambda\in \sigma(A)\}$. Then we have
$$m(A)\le k_-(A)\le k_+(A)\le 2 m(A).$$
Now, it is easy to see that $A$ has real resonances if and only if $k_+(A)=2m(A)$.
This completes the proof.
 \end{proof}

\section{Reachable families associated with the Carath\'eodory family ${\mathcal N}_A$}
\label{sec:reachable}
\setcounter{equation}{0}
In this section, we consider the notion of a reachable family associated with the Carath\'eodory
family ${\mathcal N}_A$ (see \cite{GHKK13}). We obtain a characterization of this
family in terms of $A$-normalized univalent subordination chains, and we prove that it is a
compact subset of $H(\B^n)$, for all $A\in L(\C^n)$ with $m(A)>0$. These results are generalizations
of recent results obtained in \cite{GHKK13}, in the case $k_+(A)<2m(A)$.

\begin{definition}
\label{Rf}
Let $T\in(0,\infty)$ and let $A\in L(\Cn)$ be such that $m(A)>0$.
For every $h\in\mathcal{C}([0,T],\na)$, let $v=v(z,t;h)$
be the unique locally absolutely continuous solution on $[0,T]$ of the initial value problem
\begin{equation}
\label{lode}
\displaystyle\frac{\partial v}{\partial t}=-h(v,t),\mbox{ a.e. }t\in [0,T],\quad  v(z,0;h)=z,
\end{equation}
for all $z\in \Bn$. Also, let
$$\widetilde{\mathcal{R}}_T({\rm id}_{\Bn},\na)=\big\{e^{TA}v(\cdot,T;h):\, h\in\mathcal{C}([0,T],\na)\big\},$$
be the {\it time-$T$-reachable family} of $(\ref{lode})$.
\end{definition}

\begin{remark}
\label{rrf}
Let $T>0$ and $A\in L(\Cn)$ be such that $m(A)>0$.
Then $\widetilde{\mathcal{R}}_T({\rm id}_{\Bn},\na)\subseteq S^0_A(\Bn)$
(cf. \cite{GHKK13}, in the case $k_+(A)<2m(A)$).
\end{remark}

\begin{proof}
Let $f\in \widetilde{\mathcal{R}}_T({\rm id}_{\Bn},\na)$.
Then there exists $h\in \mathcal{C}([0,T],\na)$ such that $f=e^{TA}v(\cdot,T;h)$.
Let $\widetilde{h}:\Bn\times[0,\infty)\to\Cn$ be given by
$$\widetilde{h}(z,t)=\left\{
\begin{array}{ll}
h(z,t),&t\in[0,T],\, z\in\Bn\\
Az,&t>T,\, z\in\Bn.
\end{array}
\right.$$
It is easily seen that $\widetilde{h}\in \mathcal{C}([0,\infty),\na)$
and
$$v(z,t;\widetilde{h})=\left\{
\begin{array}{ll}
v(z,t;h),&t\in[0,T],\, z\in\Bn\\
e^{(T-t)A}v(z,T;h),&t>T,\, z\in\Bn.
\end{array}
\right.$$
Hence $f=e^{TA}v(\cdot,T;h)=\lim_{t\to\infty}e^{tA}v(\cdot,t;\widetilde{h})$, and thus
$f\in S^0_A(\Bn)$, as desired.
 \end{proof}

We obtain the following characterization of the
family $\widetilde{\mathcal{R}}_T({\rm id}_{\Bn},\na)$ in terms of $A$-normalized
univalent subordination chains (see \cite{GHKK13}, in the case $k_+(A)<2m(A)$).

\begin{proposition}
\label{reachci}
Let $T\in(0,\infty)$ and $A\in L(\Cn)$ be such that $m(A)>0$. Then
$\varphi\in\widetilde{\mathcal{R}}_T({\rm id}_{\Bn},\na)$ if and only if there is an $A$-normalized univalent subordination chain $f$ such that $f(\cdot,0)=\varphi$, and  $f(\cdot,t)=e^{tA}{\rm id}_{\Bn}$, for $t\ge T$.
\end{proposition}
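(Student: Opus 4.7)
The plan is to prove the two directions by (a) an explicit construction in the forward direction, which sidesteps the non-uniqueness coming from Remark 2.18(ii), and (b) a direct appeal to Remark 2.18(i) in the reverse direction. The key data on both sides are the transition mappings of the flow generated by the Herglotz vector field $h$, extended trivially past time $T$.

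For the forward implication, assume $\varphi=e^{TA}v(\cdot,T;h)$ with $h\in\mathcal{C}([0,T],\na)$. Extend $h$ to $\widetilde h\in\mathcal{C}([0,\infty),\na)$ by $\widetilde h(z,t)=Az$ for $t>T$, exactly as in the proof of Remark 3.2, and let $(v_{s,t})_{t\ge s\ge 0}$ be the evolution family on $[0,\infty)$ solving $\partial_t v_{s,t}=-\widetilde h(v_{s,t},t)$, $v_{s,s}={\rm id}_{\Bn}$. Define
\[
f(z,t)=\begin{cases} e^{TA}v_{t,T}(z), & t\in[0,T],\\ e^{tA}z, & t\ge T.\end{cases}
\]
The cocycle identity $v_{r,t}\circ v_{s,r}=v_{s,t}$ yields $f(z,s)=f(v_{s,t}(z),t)$ for all $t\ge s\ge 0$ (checked separately in the three cases $t\le T$, $s\ge T$, and $s\le T\le t$), so $f$ is a subordination chain with transition mappings $(v_{s,t})$, and $f(\cdot,0)=e^{TA}v_{0,T}=e^{TA}v(\cdot,T;h)=\varphi$. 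Univalence of each $f(\cdot,t)$ is immediate since $v_{t,T}$ is a univalent Schwarz map and $e^{tA}$ is an automorphism of $\Cn$. For the normalization, differentiating the defining ODE in the second variable at the origin gives $Dv_{t,T}(0)=e^{-(T-t)A}$, hence $Df(0,t)=e^{TA}e^{-(T-t)A}=e^{tA}$ on $[0,T]$, while the case $t\ge T$ is trivial.

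For the reverse implication, let $f$ be such a chain. Remark 2.18(i) furnishes $\widetilde h\in\mathcal{C}([0,\infty),\na)$ with $\partial_t f=Df\cdot \widetilde h$ a.e., and, as recalled in the proof of Remark 2.18, the transition mappings $(v_{s,t})$ of $f$ are the solutions of $\partial_t v_{s,t}=-\widetilde h(v_{s,t},t)$ with $v_{s,s}={\rm id}_{\Bn}$. Setting $h=\widetilde h|_{[0,T]}\in\mathcal{C}([0,T],\na)$, we have $v_{0,T}=v(\cdot,T;h)$, and the subordination identity $f(v_{0,T}(z),T)=f(z,0)$ combined with $f(\cdot,T)=e^{TA}{\rm id}_{\Bn}$ and $f(\cdot,0)=\varphi$ forces $\varphi=e^{TA}v(\cdot,T;h)$, i.e. $\varphi\in\widetilde{\mathcal{R}}_T({\rm id}_{\Bn},\na)$.

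The main obstacle, and the reason for the explicit construction in the forward direction, is that Remark 2.18(ii) only determines an $A$-normalized univalent subordination chain associated with $\widetilde h$ up to post-composition with a holomorphic map on its Loewner range, so it does not by itself single out the representative that becomes $e^{tA}{\rm id}_{\Bn}$ for $t\ge T$. Writing $f$ directly via the transition maps fixes the correct chain in the only way compatible with the prescribed behaviour after time $T$.
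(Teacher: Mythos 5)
Your proof is correct and follows essentially the same route as the paper: the forward direction uses the identical explicit construction $f(z,t)=e^{TA}v_{t,T}(z)$ for $t\le T$ and $e^{tA}z$ for $t\ge T$, verified via the semigroup/cocycle property, and the reverse direction invokes Remark \ref{Lpde}(i) together with the subordination identity $\varphi=f(v_{0,T}(\cdot),T)=e^{TA}v_{0,T}$. Your extra verifications (the three-case cocycle check and the computation $Dv_{t,T}(0)=e^{-(T-t)A}$) simply make explicit what the paper leaves to the reader.
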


\begin{proof}
In the following, we shall use arguments similar to those in the proof of
\cite[Theorem 4.5]{GHKK13}. Let $\varphi\in\widetilde{\mathcal{R}}_T({\rm id}_{\Bn},\na)$. Then there is $h\in\mathcal{C}([0,T],\mathcal{N}_A)$ such that $\varphi=e^{TA}v(\cdot,0,T;h)$, where $v(z,s,\cdot;h)$ is the unique locally absolutely continuous solution on $[s,T]$ of the initial value problem associated to $h$ (see \cite[Theorem 2.1]{GHKK-JAM}):
$$\frac{\partial v}{\partial t}=-h(v,t),\mbox{ a.e. }t\in [s,T],\quad  v(z,s,s;h)=z,$$
for all $z\in\Bn$ and $s\in[0,T)$. We define $f:\Bn\times[0,\infty)\to\Cn$ by
$$
f(z,t)=\left\{
\begin{array}{ll}
e^{TA}v(z,t,T;h),&0\le t\le T\\
e^{tA}z,&t>T.
\end{array}
\right.$$
Since $v(\cdot,s,T;h)$ is a Schwarz univalent mapping, for all $s\in[0,T]$, and $v$ satisfies the semigroup property, $v(z,s,T)=v(v(z,s,t),t,T)$, for $0\le s\le t$, we deduce that $f$ is an $A$-normalized univalent subordination chain such that $f(\cdot,0)=\varphi$ and  $f(\cdot,t)=e^{tA}{\rm id}_{\Bn}$, for $t\ge T$.

For the other implication, let $f$ be an $A$-normalized univalent subordination chain such that $f(\cdot,0)=\varphi$ and  $f(\cdot,t)=e^{tA}{\rm id}_{\Bn}$, for $t\ge T$. By Remark $\ref{Lpde}$ (i), there exists $h\in\mathcal{C}([0,\infty),\na)$ such that $f$ is a standard solution of $(\ref{Leq})$ associated to $h$. Let $(v_{s,t})_{t\ge s\ge 0}$ be the family of transition mappings associated to $f$. $(v_{s,t})_{t\ge s\ge 0}$ satisfies the initial value problem $(\ref{lode2})$ associated to $h$ (see the proof of Remark $\ref{Lpde}$ (ii)). Since $\varphi(z)=f(z,0)=f(v_{0,T}(z),T)=e^{TA}v_{0,T}(z)$, $z\in\Bn$, we have $\varphi\in\widetilde{\mathcal{R}}_T({\rm id}_{\Bn},\na)$. This completes the proof.
 
\end{proof}

The following compactness result of the family $\widetilde{\mathcal{R}}_T({\rm id}_{\Bn},\na)$
is a generalization of \cite[Corollary 4.7]{GHKK13}.

\begin{proposition}
\label{reachcii}
Let $T\in(0,\infty)$ and let $A\in L(\Cn)$ be such that $m(A)>0$. Then
$\widetilde{\mathcal{R}}_T({\rm id}_{\Bn},\na)$ is a compact set in $H(\Bn)$.
\end{proposition}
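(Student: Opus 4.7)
The plan is to prove compactness of $\widetilde{\mathcal{R}}_T({\rm id}_{\Bn},\na)$ by establishing normality (so that every sequence admits a locally uniformly convergent subsequence) and closedness (so that the limit lies in the family). Since $H(\Bn)$ is metrizable, these two properties together yield compactness.

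For normality, note that every $f\in\widetilde{\mathcal{R}}_T({\rm id}_{\Bn},\na)$ has the form $f=e^{TA}v(\cdot,T;h)$ with $h\in\mathcal{C}([0,T],\na)$. Since $h(0,t)=0$ and $h(\cdot,t)\in\na$, the ODE $(\ref{lode})$ forces $v(0,t;h)\equiv 0$, so $v(\cdot,T;h)$ is a holomorphic self-mapping of $\Bn$ fixing the origin, and the Schwarz lemma on $\Bn$ yields $\|v(z,T;h)\|\le\|z\|$. Hence $\|f(z)\|\le\|e^{TA}\|$ uniformly in $z\in\Bn$ and across the whole family, and Montel's theorem gives normality.

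For closedness, I would use the characterization via $A$-normalized univalent subordination chains in Proposition \ref{reachci}. Let $(f_j)\subseteq\widetilde{\mathcal{R}}_T({\rm id}_{\Bn},\na)$ converge locally uniformly to $f$, and attach to each $f_j$ the chain $L_j$ with $L_j(\cdot,0)=f_j$ and $L_j(\cdot,t)=e^{tA}{\rm id}_{\Bn}$ for $t\ge T$; explicitly, $L_j(z,t)=e^{TA}v(z,t,T;h_j)$ for $t\in[0,T]$. The Schwarz bound on transition mappings yields $\|L_j(z,t)\|\le\|e^{TA}\|\,\|z\|$ on $\Bn\times[0,T]$; combined with Cauchy estimates for $DL_j(\cdot,t)$ and the uniform bound $(\ref{growth2})$ for $h_j(\cdot,t)\in\na$, the Loewner equation $\p_tL_j=DL_j(\cdot,t)h_j(\cdot,t)$ delivers a uniform Lipschitz bound in $t$ on $K\times[0,T_0]$ for any compact $K\subset\Bn$ and any $T_0>0$. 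A diagonal Arzel\`a--Ascoli argument then provides a subsequence $L_{j_k}\to L$ locally uniformly on $\Bn\times[0,\infty)$.

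To conclude, I would verify that the limit $L$ is an $A$-normalized univalent subordination chain with $L(\cdot,0)=f$ and $L(\cdot,t)=e^{tA}{\rm id}_{\Bn}$ for $t\ge T$: the normalization $DL(0,t)=e^{tA}$ and the endpoint condition pass to the limit directly, the subordination relation is preserved after extracting a further convergent subsequence of the (normal) transition mappings, and univalence of each $L(\cdot,t)$ follows from Hurwitz's theorem in several complex variables since $\det DL(0,t)=\det e^{tA}\ne 0$. Proposition \ref{reachci} then yields $f\in\widetilde{\mathcal{R}}_T({\rm id}_{\Bn},\na)$. I expect the main obstacle to lie in the equicontinuity in $t$ of the chains $L_j$: the Herglotz fields $h_j$ are only measurable in $t$, so uniform time control has to be obtained through the growth estimates of Proposition \ref{NA-growth} combined with Cauchy bounds on $DL_j$, precisely where the extension beyond the regime $k_+(A)<2m(A)$ treated in \cite{GHKK13} requires extra care.
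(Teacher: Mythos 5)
Your proof is correct and follows essentially the same strategy as the paper's: establish normality, then use the characterization of $\widetilde{\mathcal{R}}_T({\rm id}_{\Bn},\na)$ in Proposition~\ref{reachci} to pass to $A$-normalized univalent subordination chains, extract a convergent subsequence of chains, and verify the limit is again a chain with the required endpoint behaviour. The minor differences are technical rather than structural. For normality you invoke the Schwarz lemma directly on $v(\cdot,T;h)$ to get $\|f(z)\|\le\|e^{TA}\|\|z\|$, which is a cleaner uniform bound than the paper's estimate $\|f(z)\|\le e^{T(k(A)-m(A))}\|z\|/(1-\|z\|)^2$ drawn from \cite[Theorem 2.1]{GHKK-JAM}; both suffice. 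For closedness the paper reuses its normality bound via the auxiliary chains $F_{t,k}(z,s)=e^{-tA}f_k(z,t+s)$ to control $\|e^{-tA}f_k(z,t)\|$ uniformly, then delegates the selection to \cite[Theorem 8.1.14]{GK}, whereas you unfold that citation: Schwarz bound on $L_j$, Cauchy estimates on $DL_j$, the Herglotz bound (\ref{growth2}) on $h_j$, the Loewner equation to get a uniform Lipschitz-in-$t$ estimate, and a diagonal Arzel\`a--Ascoli argument, followed by verifying normalization, subordination (via normality of the Schwarz transition maps), and univalence (Hurwitz plus $\det DL(0,t)=\det e^{tA}\neq 0$). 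This is precisely the content hidden in the reference the paper uses. One small clarification worth making explicit: neither proof actually needs any extra care beyond $m(A)>0$; the hypothesis $k_+(A)<2m(A)$ from \cite{GHKK13} plays no role in this particular compactness argument, so the ``extra care'' you flag at the end is not in fact required here.
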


\begin{proof}
By \cite[Theorem 2.1, (2.2)]{GHKK-JAM} and $(\ref{growth-exp})$, we deduce that, for every $t\in[0,T]$ and
$f\in\widetilde{\mathcal{R}}_t({\rm id}_{\Bn},\na)$, we have
$$\|f(z)\|\le\|e^{tA}\|e^{-m(A)t}\frac{\|z\|}{(1-\|z\|)^2}\le e^{T(k(A)-m(A))}\frac{\|z\|}{(1-\|z\|)^2},\quad z\in\Bn.$$
In particular, $\widetilde{\mathcal{R}}_T({\rm id}_{\Bn},\na)$ is a normal family.

Next, we use arguments  similar to those in the proof of  \cite[Corollary 4.7]{GHKK13}, to deduce that $\widetilde{\mathcal{R}}_T({\rm id}_{\Bn},\na)$ is closed. Let $(\varphi_k)_{k\in\N}$ be a sequence in $\widetilde{\mathcal{R}}_T({\rm id}_{\Bn},\na)$ that converges, locally uniformly on $\Bn$, to $\varphi\in H(\Bn)$.  In view of Proposition $\ref{reachci}$, for every $k\in\N$, there exists an $A$-normalized univalent subordination chain $f_k$ such that $f_k(\cdot,0)=\varphi_k$ and  $f_k(\cdot,t)=e^{tA}{\rm id}_{\Bn}$, for $t\ge T$. For every $t\in[0,T]$ and $k\in\N$, let $F_{t,k}(z,s)=e^{-tA}f_k(z,t+s)$, for $z\in\Bn$, $s\ge0$. Then $F_{t,k}$ is an $A$-normalized univalent subordination chain such that $F_{t,k}(\cdot,s)=e^{sA}{\rm id}_{\Bn}$, for $s\ge T-t$, and thus, by Proposition \ref{reachci}, $F_{t,k}(\cdot, 0)=e^{-tA}f_k(\cdot,t)\in\widetilde{\mathcal{R}}_{T-t}({\rm id}_{\Bn},\na)$, for all $t\in[0,T]$ and $k\in\N$. From the above inequality we deduce that, for every $r\in(0,1)$, there exists $C_r>0$ such that
$$\|e^{-tA}f_k(z,t)\|\le C_r,\qquad \|z\|\le r,\,t\ge 0,\,k\in\N.$$
Using arguments similar to those in the proof of \cite[Theorem 8.1.14]{GK},
we deduce that there exists a subsequence $(f_{k_p})_{p\in\N}$ such that $f_{k_p}(\cdot,t)\to f(\cdot,t)$, locally uniformly on $\Bn$, as $p\to\infty$, for all $t\ge 0$, where $f$ is an $A$-normalized univalent subordination chain such that $f(\cdot,0)=\varphi$ and  $f(\cdot,t)=e^{tA}{\rm id}_{\Bn}$, for $t\ge T$. Hence, $\varphi\in\widetilde{\mathcal{R}}_T({\rm id}_{\Bn},\na)$, in view of Proposition $\ref{reachci}$.
This completes the proof.
 \end{proof}

\section{Density results for certain subsets of $S(\Bn)$}
\label{sec:density}
\setcounter{equation}{0}
In this section we obtain approximation properties
of starlike, convex, spirallike mappings, and
mappings which have $A$-parametric representation on $\Bn$, by automorphisms of $\Cn$,
$n\geq 2$.

First, we obtain the following approximation result
of spirallike mappings (in particular, starlike mappings) on $\Bn$, by
automorphisms of $\Cn$ whose restrictions to $\Bn$ are spirallike (respectively, starlike),
if $n\geq 2$.

\begin{theorem}
\label{tspst}
Let $A\in L(\Cn)$ be such that $m(A)>0$. Then
$$\widehat{S}_A(\Bn)=\overline{\widehat{S}_A(\Bn)\cap \mathcal{A}(\Bn)},\quad\forall\,n\geq 2.$$
In particular,
$$S^*(\Bn)=\overline{S^*(\Bn)\cap \mathcal{A}(\Bn)},\quad \forall\,n\geq 2.$$
\end{theorem}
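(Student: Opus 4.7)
The plan is to pass first to a dilation $f_r(z)=f(rz)/r$ that extends biholomorphically past $\partial\Bn$ and remains spirallike, then to approximate $f_r$ by normalized automorphisms of $\Cn$ via the Andersén-Lempert theorem on the enlarged domain, and finally to show that such approximants must themselves restrict to spirallike mappings because, after dilation, the defining inequality of $\na$ becomes \emph{strictly} positive up to $\|z\|=1$. A diagonal argument in $r\to 1^-$ then closes the argument; the case $A=I_n$ gives the statement for $S^*(\Bn)$.

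First I would verify that $f_r\in\widehat{S}_A(\Bn)$ and that $f_r\to f$ locally uniformly on $\Bn$ as $r\to1^-$. Spirallikeness of $f_r(\Bn)$ follows from Schwarz's lemma applied to the transition mappings $v_{s,t}(z)=f^{-1}\big(e^{-(t-s)A}f(z)\big)$ of the $A$-normalized subordination chain $F(z,t)=e^{tA}f(z)$: since $v_{s,t}(0)=0$ and $v_{s,t}:\Bn\to\Bn$, one has $\|v_{s,t}(z)\|\le\|z\|$, so $v_{s,t}(r\Bn)\subseteq r\Bn$, which is equivalent to $e^{-uA}f(r\Bn)\subseteq f(r\Bn)$ for all $u\ge0$; hence $f_r(\Bn)=f(r\Bn)/r$ is a spirallike domain with respect to $A$.

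Next, by Proposition \ref{p.spiral-runge} the image $f(\Bn)$ is Runge, hence so is $f_r(\tfrac{1}{r}\Bn)=f(\Bn)/r$. Applying the Andersén-Lempert theorem on the Runge domain $\tfrac{1}{r}\Bn$ yields $\Psi_k\in\Aut(\Cn)$ with $\Psi_k\to f_r$ locally uniformly on $\tfrac{1}{r}\Bn$; since $\Psi_k(0)\to0$ and $D\Psi_k(0)\to I_n$, composition with affine automorphisms makes each $\Psi_k$ normalized while preserving the convergence. As $\overline{\Bn}$ is compact in $\tfrac{1}{r}\Bn$, the convergence is uniform on $\overline{\Bn}$, and by Cauchy estimates so is the convergence of $h_k(z):=D\Psi_k(z)^{-1}A\Psi_k(z)$ to $h_{f_r}(z):=Df_r(z)^{-1}Af_r(z)$. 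By Proposition \ref{spiral}, it remains to show that $h_k\in\na$ for large $k$, i.e.\ $\Re\langle h_k(z),z\rangle\ge0$ on $\Bn$.

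The main obstacle is this last step, because the condition $h\in\na$ is not open in the topology of locally uniform convergence on $\Bn$; a direct Andersén-Lempert approximation of $f$ itself would offer no control of $\Re\langle h_k(z),z\rangle$ near $\partial\Bn$, where $\Re\langle h_f(z),z\rangle$ typically degenerates to $0$. The dilation resolves this: using $h_{f_r}(z)=h_f(rz)/r$ and Proposition \ref{NA-growth}, one obtains
$$\Re\langle h_{f_r}(z),z\rangle\ge m(A)\|z\|^2\frac{1-r\|z\|}{1+r\|z\|},\qquad z\in\overline{\Bn},$$
which is strictly positive on $\overline{\Bn}\setminus\{0\}$ with positive lower bound $m(A)(1-r)/(1+r)$ at $\|z\|=1$. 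This strict positivity survives the uniform perturbation $h_k\to h_{f_r}$ on any annulus $\{\delta\le\|z\|\le1\}$, while near the origin the Taylor expansion $h_k(z)=Az+O(\|z\|^2)$, whose quadratic part has coefficients uniformly bounded in $k$ by Cauchy's estimates, forces $\Re\langle h_k(z),z\rangle\ge\tfrac12 m(A)\|z\|^2$ on a fixed small ball independent of $k$. Combining the two regions yields $h_k\in\na$, hence $\Psi_k|_{\Bn}\in\widehat{S}_A(\Bn)\cap\mathcal{A}(\Bn)$ for all sufficiently large $k$, which together with $f_r\to f$ gives the desired approximation.
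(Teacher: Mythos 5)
Your proof is correct and follows essentially the same route as the paper: use Hamada's Runge result plus Anders\'en--Lempert to produce approximating automorphisms, then exploit a dilation $r\to 1^-$ together with the strict lower bound from Proposition \ref{NA-growth} to force the approximants into $\widehat{S}_A(\Bn)$. The only cosmetic difference is the order of operations (you dilate $f$ to $f_r$ before approximating on the enlarged ball, whereas the paper approximates $f$ once on $\Bn$ and then dilates the automorphisms $\psi_{k_m}$ to $\varphi_m(z)=\frac{1}{r_m}\psi_{k_m}(r_m z)$), and you spell out the near-origin estimate a bit more explicitly than the paper does in the proof of Theorem \ref{tspst} (the paper supplies the analogous integral estimate for $\alpha_k$ in the proof of Theorem \ref{grp}).
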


\begin{proof}
Let $f\in \widehat{S}_A(\Bn)$. By \cite[Theorem 3.1]{Ha2},
we have that $f(\Bn)$ is a Runge domain. By \cite[Theorem 2.1]{And-Lemp} (see also
\cite[Theorem 1.1]{FR}), there exists a sequence  $(\psi_k)_{k\in\mathbb{N}}$
in ${\rm Aut}(\Cn)$ that converges locally uniformly on $\Bn$ to $f$.
In view of \cite[Theorem 2.17]{DGHK}, we may assume that the mappings
in $(\psi_k)_{k\in\mathbb{N}}$ are normalized, and we may also deduce that
\begin{equation}
\label{e1}
\big(D\psi_k(z)\big)^{-1}A\psi_k(z)\to \big(Df(z)\big)^{-1}Af(z), \mbox{ as }k\to\infty,
\end{equation}
locally uniformly with respect to $z\in\Bn$.

Let $(r_m)_{m\in\mathbb{N}}$ be a sequence in $(0,1)$ that converges to $1$.
Taking into account (\ref{growth1}) and Proposition \ref{spiral}, we deduce that
\begin{equation}
\label{e2}
\Re\big\langle \big(Df(r_m z)\big)^{-1}Af(r_m z),r_m z\big\rangle\ge m(A)r_m^2\|z\|^2
\frac{1-r_m}{1+r_m}>0,
\end{equation}
for all $z\in\Bn\setminus\{0\}$ and $m\in\mathbb{N}$.
By $(\ref{e1})$ and $(\ref{e2})$, we deduce that for every $m\in\mathbb{N}$, there exists $k_m\in\mathbb{N}$
such that $$\Re\big\langle \big(D\psi_{k_m}(r_m z)\big)^{-1}A\psi_{k_m}(r_m z),r_mz\big\rangle>0,
\quad z\in\Bn\setminus\{0\}.$$
The sequence $(k_m)_{m\in\N}$ may be chosen to be increasing.
For every $m\in\mathbb{N}$, let $\varphi_m(z)=\frac{1}{r_m}\psi_{k_m}(r_mz)$, $z\in\Bn$.
Then $(\varphi_m)_{m\in\mathbb{N}}$ is a sequence in $\mathcal{A}(\Bn)\cap \widehat{S}_A(\Bn)$.
Since $(\psi_k)_{k\in\mathbb{N}}$ converges locally uniformly on $\Bn$ to $f$ and $(r_m)_{m\in\mathbb{N}}$
converges to $1$, we deduce that $(\varphi_m)_{m\in\mathbb{N}}$
converges locally uniformly on $\Bn$ to $f$.
Therefore $f\in \overline{\widehat{S}_A(\Bn)\cap \mathcal{A}(\Bn)}$.

Using Proposition \ref{spiral},
one can easily prove that $\widehat{S}_A(\Bn)$ is closed in $H(\Bn)$,
for every $A\in L(\Cn)$ with $m(A)>0$.
From this, we obtain the reverse inclusion.
This completes the proof.
 \end{proof}

In connection with Theorem \ref{tspst}, we have the following density result for $K(\Bn)$.

\begin{theorem}
\label{convaut}
If $n\geq 2$, then $K(\Bn)=\overline{K(\Bn)\cap \mathcal{A}(\Bn)}$.
\end{theorem}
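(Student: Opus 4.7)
The plan is to adapt the rescaling argument of Theorem \ref{tspst} to the convex setting, replacing the strict spirallikeness inequality (\ref{e2}) by a strict convexity condition on the smooth hypersurfaces $f(r\partial\Bn)$. Since convex domains in $\Cn$ are polynomially convex, hence Runge, $f(\Bn)$ is Runge for every $f \in K(\Bn)$, so \cite[Theorem 2.1]{And-Lemp} yields a sequence $(\psi_k)\subset \Aut(\Cn)$ of normalized automorphisms (the normalization being justified by the compactness of $K(\Bn)$ in $H(\Bn)$, parallel to the use of \cite[Theorem 2.17]{DGHK} in Theorem \ref{tspst}) with $\psi_k \to f$ locally uniformly on $\Bn$. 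By Weierstrass' theorem, this convergence is automatically $C^\infty_{\mathrm{loc}}$, so in particular $D\psi_k \to Df$ and $D^2\psi_k \to D^2 f$ locally uniformly on $\Bn$.

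The crucial step is the following strict convexity claim: for each $r \in (0,1)$, the smooth hypersurface $f(r\partial\Bn) = \partial f(r\Bn) \subset f(\Bn)$ is strictly convex, i.e.\ its real second fundamental form is positive definite on the tangent space, with a uniform positive lower bound $\delta_r > 0$ by compactness of $r\partial\Bn$. Equivalently, an appropriate pointwise differential criterion for convexity (such as the Kikuchi--Suffridge inequality), applied to the rescaled mapping $f_r(z) := r^{-1} f(rz)$ which is convex on the strictly larger ball $r^{-1}\Bn \supsetneq \overline{\Bn}$, yields a strict inequality on $\overline{\Bn}$ with uniform positive slack.

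Strict convexity of a smooth hypersurface is a $C^2$-open condition. With $r_m \nearrow 1$ and an increasing sequence $k_m$ chosen so that $\psi_{k_m}$ is sufficiently $C^2$-close to $f$ on $r_m\overline{\Bn}$ (possible by the uniform slack $\delta_{r_m}$), the hypersurface $\psi_{k_m}(r_m\partial\Bn)$ remains strictly convex, hence the enclosed domain $\psi_{k_m}(r_m\Bn)$ is convex. The rescaled mapping $\varphi_m(z) := r_m^{-1}\psi_{k_m}(r_m z)$ is therefore a normalized automorphism of $\Cn$ whose restriction to $\Bn$ is convex, and $\varphi_m \to f$ locally uniformly on $\Bn$. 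The reverse inclusion $\overline{K(\Bn)\cap\mathcal{A}(\Bn)} \subseteq K(\Bn)$ is immediate from closedness of $K(\Bn)$ in $H(\Bn)$.

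The main technical obstacle is to make the strict convexity claim quantitative, i.e.\ to rigorously derive the uniform positive lower bound $\delta_r > 0$ for the second fundamental form of $f(r\partial\Bn)$. The natural route is a maximum-principle argument applied to the convexity defect (a nonnegative, essentially subharmonic quantity built from $Df$ and $D^2 f$): vanishing at an interior point would force identical vanishing, contradicting the normalization $Df(0)=I$ unless $f$ is affine (a trivial case handled separately). The underlying fact is that $f$ is convex not only on $\Bn$ but on the strictly larger ball $r^{-1}\Bn$ after rescaling, which is what produces the strict positive gap in direct analogy with inequality (\ref{e2}) in the proof of Theorem \ref{tspst}.
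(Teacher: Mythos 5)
Your approach matches the paper's in all essentials: approximate by automorphisms (using that convex implies Runge), rescale by $r_m\nearrow 1$, and observe that the analytic convexity criterion is $C^2$-stable with a uniform positive gap on the compact subball $\|z\|\le r_m$. The paper uses exactly the Kikuchi--Suffridge criterion in the form (\ref{ekik}), $1-\Re\langle(Df(z))^{-1}D^2f(z)(v,v),z\rangle>0$ for $\Re\langle z,v\rangle=0$, $\|v\|=1$, sets $\delta_m$ to be the maximum of the subtracted term over $\{\|z\|\le r_m\}\times\{\|v\|=1,\ \Re\langle z,v\rangle=0\}$, chooses $k_m$ so that $(D\psi_{k_m})^{-1}D^2\psi_{k_m}$ is uniformly within $1-\delta_m$ of $(Df)^{-1}D^2f$ on $\|z\|\le r_m$, and concludes that $\varphi_m=r_m^{-1}\psi_{k_m}(r_m\cdot)$ satisfies the same strict inequality on all of $\Bn$.

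The one place you go astray is your final paragraph. You describe the derivation of the uniform lower bound $\delta_r>0$ as the ``main technical obstacle'' and propose resolving it by a maximum-principle argument on a ``convexity defect'' alleged to be subharmonic, concluding that interior vanishing would force identical vanishing. This is both unnecessary and unjustified as stated: the quantity $\inf_v\bigl[1-\Re\langle(Df(z))^{-1}D^2f(z)(v,v),z\rangle\bigr]$ is a pointwise infimum over $v$ of real parts of nonholomorphic expressions and has no evident subharmonicity, nor does the ``vanishing propagates'' claim follow from any standard principle. Moreover, the defect is manifestly nonconstant (it equals $1$ at $z=0$), so the contradiction you aim for would not even arise. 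In fact there is no obstacle at all: the Kikuchi--Suffridge inequality already holds in the strict form (\ref{ekik}) at every $z\in\Bn$, the set $\{(z,v):\|z\|\le r_m,\ \|v\|=1,\ \Re\langle z,v\rangle=0\}$ is compact, and the left-hand side is continuous, so the maximum $\delta_m$ of the subtracted term is attained and is $<1$. Your own second paragraph already contains this correct argument (compactness of $\overline{\Bn}\subset r^{-1}\Bn$ where the criterion holds strictly); the maximum-principle detour should be dropped.
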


\begin{proof}
Let $f\in K(\Bn)$. Since $f(\Bn)$ is convex, $f(\Bn)$ is a Runge domain (see e.g. \cite[Theorem 3.1]{Ha2}). By \cite[Theorem 2.1]{And-Lemp} (see also
\cite[Theorem 1.1]{FR}), there exists a sequence  $(\psi_k)_{k\in\mathbb{N}}$
in ${\rm Aut}(\Cn)$ that converges locally uniformly on $\Bn$ to $f$.
In view of \cite[Theorem 2.17]{DGHK}, we can assume that the mappings
in $(\psi_k)_{k\in\mathbb{N}}$ are normalized, and we can also deduce that
\begin{equation}
\label{econv}
\big(D\psi_k(z)\big)^{-1}D^2\psi_k(z)\to \big(Df(z)\big)^{-1}D^2f(z), \mbox{ as }k\to\infty,
\end{equation}
locally uniformly with respect to $z\in\Bn$.

Since $f\in K(\Bn)$, we have that (see e.g. \cite[Theorem 6.3.4]{GK})
\begin{equation}
\label{ekik}
1-\Re\big\langle \big(Df(z)\big)^{-1}D^2f(z)(v,v),z\big\rangle>0,
\end{equation}
for all $z\in\Bn$, $v\in\Cn$ with $\Re\left\langle z,v\right\rangle=0$, $\|v\|=1$.

Let $(r_m)_{m\in\mathbb{N}}$ be a sequence in $(0,1)$ that converges to $1$.
Fix an arbitrary $m\in\mathbb{N}$. Let
$$\delta_m=\max\Big\{\Re\big\langle \big(Df(z)\big)^{-1}D^2f(z)(v,v),z\big\rangle:\|z\|\le r_m, \|v\|=1, \Re\left\langle z,v\right\rangle=0\Big\}.$$
By $(\ref{ekik})$, we have $1-\delta_m>0$. In view of $(\ref{econv})$, there is $k_m\in\mathbb{N}$ such that
$$\max_{\|v\|=1}\Big\|\big(D\psi_{k_m}(z)\big)^{-1}D^2\psi_{k_m}(z)(v,v)- \big(Df(z)\big)^{-1}D^2f(z)(v,v)\Big\|<1-\delta_m,$$
for $\|z\|\le r_m$.
We denote by $\varepsilon_m$ the left hand side of the above inequality.

Let $\varphi_m(z)=\frac{1}{r_m}\psi_{k_m}(r_mz)$, $z\in\Bn$.
The sequence $(k_m)_{m\in\N}$ may be chosen to be increasing.
Taking into account the above inequalities, we deduce that
$$1-\Re\big\langle \big(D\varphi_m(z)\big)^{-1}D^2\varphi_m(z)(v,v),z\big\rangle$$
$$=1-\Re\big\langle \big(D\psi_{k_m}(r_mz)\big)^{-1}D^2\psi_{k_m}(r_mz)(v,v),r_mz\big\rangle$$
$$\ge 1-\Re\big\langle \big(Df(r_mz)\big)^{-1}D^2f(r_mz)(v,v),r_mz\big\rangle$$
$$-\Big\|\big(D\psi_{k_m}(r_mz)\big)^{-1}D^2\psi_{k_m}(r_mz)(v,v)- \big(Df(r_mz)\big)^{-1}D^2f(r_mz)(v,v)\Big\|$$
$$\ge 1-\delta_m-\varepsilon_m>0,$$
for all $z\in\Bn$, $v\in\Cn$ with $\Re\left\langle z,v\right\rangle=0$, $\|v\|=1$. Hence $\varphi_m\in K(\Bn)$.

Since $r_m\to 1$, as $m\to\infty$, the sequence $(\varphi_m)_{m\in\mathbb{N}}$ in $K(\Bn)\cap\mathcal{A}(\Bn)$
converges locally uniformly on $\Bn$ to $f$. Therefore, $f\in \ov{K(\Bn)\cap\mathcal{A}(\Bn)}$. The reversed inclusion is obvious, since $K(\Bn)$ is a compact family.
 \end{proof}

The next proposition provides a basic difference between the case of the Euclidean unit ball $\Bn$ and the case of the unit polydisc $\mathbb{U}^n$ in $\Cn$,
regarding Theorem $\ref{convaut}$. Let $K(\mathbb{U}^n)$ be the family of univalent normalized holomorphic mappings $f:\mathbb{U}^n\to\Cn$ such that $f(\mathbb{U}^n)$ is convex. Also, let $\mathcal{A}(\mathbb{U}^n)$ be the family of normalized automorphisms of $\Cn$ that are restricted to $\mathbb{U}^n$.

\begin{proposition}
\label{pcpol}
$\ov{K(\U^n)\cap \mathcal{A}(\U^n)}=\{{\rm id}_{\U^n}\}\subsetneq K(\U^n)$, for all $n\in\N.$
\end{proposition}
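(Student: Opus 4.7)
The plan is to reduce the claim to showing $K(\U^n) \cap \mathcal{A}(\U^n) = \{\mathrm{id}_{\U^n}\}$; since $\mathrm{id}_{\U^n}$ lies in this intersection and the closure of a singleton is itself, this suffices. The strict containment $\{\mathrm{id}_{\U^n}\} \subsetneq K(\U^n)$ I would settle with the explicit non-identity element $f(z) = (z_1/(1 - z_1), z_2, \ldots, z_n)$, whose image is the convex product of a half-plane and $\U^{n-1}$. So the substance of the proof is to show that if $\Psi \in \Aut(\Cn)$ is normalized and $\Psi(\U^n)$ is convex, then $\Psi = \mathrm{id}_{\Cn}$.

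My main lever is that $\p\U^n$ is foliated by complex discs: for each coordinate direction $j$ and each choice of boundary data $z_k^{(0)} \in \ov{\U}$ $(k \ne j)$ with $|z_{k_0}^{(0)}| = 1$ for some $k_0 \ne j$, the map $\zeta \mapsto (z_1^{(0)}, \ldots, \zeta, \ldots, z_n^{(0)})$ (with $\zeta$ in the $j$-th slot) embeds $\U$ in $\p\U^n$. Its $\Psi$-image is a complex disc in $\p\Psi(\U^n)$, and a standard maximum-principle argument shows that any complex disc in the boundary of a convex domain of $\Cn$ must lie in a complex affine hyperplane. For $n = 2$ this yields, after analytic continuation in $z_1$ from $\{|z_1| = 1\}$ to $\C$ via the identity theorem (using entirety of $\Psi$), and evaluation at $z_1 = 0$ where $\p_\zeta \Psi(0, 0) = e_2$, the relation $\Psi_1(0, \zeta) \equiv 0$; symmetrically $\Psi_2(z_1, 0) \equiv 0$. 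Each coordinate axis is then $\Psi$-invariant, and the restriction of $\Psi$ to each axis is a normalized injective entire self-map of $\C$, hence the identity; therefore $\Psi(z) = z + z_1 z_2 G(z)$ for some entire $G$. Re-injecting this ansatz into the disc-on-line relation and using that $\det D\Psi$ is a nowhere-vanishing entire function on $\Cn$ produces an explicit rational formula for $G$; a Little-Picard-based analysis of this formula (exploiting that each one-variable entire function $z_j \mapsto z_j G_\ell(z_j, 0)$ omits at most one value) then forces $G \equiv 0$ and hence $\Psi = \mathrm{id}$. For $n \ge 3$ the same strategy applies to each pair of coordinates.

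The main obstacle will be the last step: orchestrating the convexity-driven functional equations, the nowhere-vanishing Jacobian condition, and entirety of $G$ to force $G \equiv 0$ cleanly. Little Picard enters at two places — to pin down the axis restrictions of $G$ and to rule out nontrivial denominators in the rational formula for $G$ — and arranging the bookkeeping without a lengthy explicit computation is the essential technical difficulty.
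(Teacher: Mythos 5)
Your overall plan (reduce to showing $K(\U^n)\cap\mathcal{A}(\U^n)=\{{\rm id}_{\U^n}\}$, exhibit $(z_1/(1-z_1),z_2,\dots,z_n)$ for the strict inclusion) is sound, and your route is genuinely different from the paper's. The paper simply invokes Suffridge's structure theorem for convex maps of the polydisc (\cite[Theorem 6.3.2]{GK}): every $f\in K(\U^n)$ has the form $f(z)=(\varphi_1(z_1),\dots,\varphi_n(z_n))$ with $\varphi_j\in K(\U)$. Given that, if $\phi=\Phi|_{\U^n}$ with $\Phi\in{\rm Aut}(\Cn)$, the identity principle shows each component $\Phi^i$ depends only on $z_i$ on all of $\Cn$, so each $\phi_i$ extends to an automorphism of $\C$, hence is affine, hence (being normalized) the identity. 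Your proposal tries to reprove, in effect, the relevant consequence of that structure theorem from scratch via analytic discs in $\p\U^n$ and supporting hyperplanes. That is a legitimate idea, and the early steps (discs in the boundary of a convex domain lie in complex affine hyperplanes; propagation of the resulting determinant identity from $\{|z_1|=1\}\times\U$ to $\C^2$ by the identity theorem; invariance of the axes) can be made rigorous.

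However, as written the proof has a genuine gap exactly where you flag it: the step ``re-injecting the ansatz $\Psi(z)=z+z_1z_2G(z)$ \dots produces an explicit rational formula for $G$; a Little-Picard-based analysis \dots forces $G\equiv0$'' is a description of work not done, not an argument, and it is the crux of the whole proof. The clean way to close it is to use your disc-on-line relation more directly: once the determinant identity holds on all of $\C^2$, every line $\{z_1=c\}\times\C$ is mapped by $\Psi$ into a complex affine line, so its restriction there is an injective entire map of $\C$ into a line, hence affine (this is where Picard/Casorati--Weierstrass enters, once, not twice); the same holds for horizontal lines, so $\Psi(z_1,z_2)=a+bz_1+cz_2+dz_1z_2$ with $a,b,c,d\in\C^2$. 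Normalization gives $a=0$, $b=e_1$, $c=e_2$, and then $\det D\Psi(z)=1+d_2z_1+d_1z_2$, which is nowhere vanishing only if $d=0$, i.e.\ $\Psi={\rm id}$. Without this (or an equivalent) final computation, the proposal does not establish the proposition; and in any case you should note that the $n\ge3$ ``pairwise'' reduction and the trivial $n=1$ case (where there are no boundary discs, but automorphisms of $\C$ are affine) need a sentence each. The paper's two-line argument via the known classification of $K(\U^n)$ avoids all of this.
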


\begin{proof}
We have (see e.g. \cite[Theorem 6.3.2]{GK})
$$K(\mathbb{U}^n)=\{f:\mathbb{U}^n\to\Cn:f(z)=
(\varphi_1(z_1),\ldots,\varphi_n(z_n)),z=(z_1,\ldots,z_n)\in\mathbb{U}^n,$$
$$\mbox{ for some }\varphi_1,\ldots \varphi_n\in K(\mathbb{U})\}.$$
Let $\phi\in K(\mathbb{U}^n)\cap \mathcal{A}(\mathbb{U}^n)$. Then $\phi(z)=(\phi_1(z_1),\ldots,\phi_n(z_n))$, $z=(z_1,\ldots,z_n)\in\mathbb{U}^n$, for some $\phi_1,\ldots \phi_n\in K(\mathbb{U})$. Since $\phi\in\mathcal{A}(\mathbb{U}^n)$, there exists $\Phi\in {\rm Aut}(\Cn)$ such that $\Phi\big|_{\U^n}=\phi$. Let $\Phi^1,\ldots,\Phi^n:\Cn\to\C$ be the components of $\Phi$. Fix an arbitrary  $i\in\{1,\ldots,n\}$. For every $j\in\{1,\ldots,n\}$ with $i\neq j$, we have
$$\frac{\partial \Phi^i}{\partial z_j}(z)=\frac{\partial }{\partial z_j}\phi_i(z_i)=0,\mbox{ for all } z=(z_1,\ldots,z_n)\in\U^n.$$
By the identity principle for holomorphic mappings, we deduce that $\Phi^i$ depends only on $z_i$ on $\Cn$. Hence $\phi_i$ has a holomorphic extension to $\C$, given by $\Phi^i$. Let us use the same notation $\phi_i$ for this extension. Thus we have $\Phi(z)=(\phi_1(z_1),\ldots,\phi_n(z_n))$, for $z=(z_1,\ldots,z_n)\in\Cn$. Since $\Phi\in{\rm Aut}(\Cn)$, it is easy to prove that every $\phi_i$ is an automorphism of $\C$, and thus $\phi_i={\rm id}_{\C}$. Therefore $\phi={\rm id}_{\U^n}$.
This completes the proof.
 \end{proof}

The following density result related to
the family $S_g^*(\Bn)$ is in connection with Theorem \ref{convaut}.

\begin{theorem}
\label{grp}
Let $g:\U\to\C$ be a univalent function such that $g(0)=1$ and $\Re g(\zeta)>0$ for $\zeta\in\U$.
Then
$$S^*_g(\Bn)=\overline{S^*_g(\Bn)\cap\mathcal{A}(\Bn)},\quad \forall\, n\geq 2.$$
\end{theorem}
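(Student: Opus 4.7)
The plan is to adapt the strategy of Theorems \ref{tspst} and \ref{convaut} to the $g$-starlike setting. Fix $f\in S_g^*(\Bn)$. By Remark \ref{r.sgstar}(i), $f\in S^*(\Bn)=\widehat{S}_{I_n}(\Bn)$, so Proposition \ref{p.spiral-runge} yields that $f(\Bn)$ is Runge, and then \cite[Theorem 2.1]{And-Lemp} produces a sequence $(\psi_k)$ in $\mathrm{Aut}(\Cn)$ with $\psi_k\to f$ locally uniformly on $\Bn$. As in the proof of Theorem \ref{tspst}, we may assume the $\psi_k$ are normalized.

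For any $F\in\mathcal{L}S(\Bn)$, introduce the auxiliary function
$$G_F(z):=\frac{1}{\|z\|^2}\bigl\langle [DF(z)]^{-1}F(z),z\bigr\rangle,\qquad z\in\Bn\setminus\{0\}.$$
A short Taylor-expansion argument, using $F(z)=z+O(\|z\|^2)$ and $[DF(z)]^{-1}=I_n+O(\|z\|)$, gives $[DF(z)]^{-1}F(z)=z+O(\|z\|^2)$, so $G_F$ extends continuously to $\Bn$ with $G_F(0)=1=g(0)$. The condition $f\in S_g^*(\Bn)$ is then exactly $G_f(\Bn)\subseteq g(\U)$, and $g(\U)$ is open by univalence of $g$.

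Pick $(r_m)\subset(0,1)$ with $r_m\uparrow 1$. The set $G_f(\overline{r_m\Bn})$ is a compact subset of the open set $g(\U)$, hence at positive distance from $\C\setminus g(\U)$. Combining locally uniform convergence $\psi_k\to f$ (and of all derivatives, by Weierstrass) with Cauchy estimates on the normal family $(\psi_k)$ to bound $G_{\psi_k}$ near $0$ uniformly in $k$, one obtains $G_{\psi_k}\to G_f$ uniformly on $\overline{r_m\Bn}$. Thus one can pick $k_m$ large and increasing so that $G_{\psi_{k_m}}(\overline{r_m\Bn})\subset g(\U)$. Setting $\varphi_m(z):=\frac{1}{r_m}\psi_{k_m}(r_mz)$, a direct computation yields $G_{\varphi_m}(z)=G_{\psi_{k_m}}(r_mz)\in g(\U)$ for every $z\in\Bn\setminus\{0\}$, so $\varphi_m\in S_g^*(\Bn)\cap\mathcal{A}(\Bn)$, and $\varphi_m\to f$ locally uniformly on $\Bn$. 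The reverse inclusion follows from the compactness of $S_g^*(\Bn)$ (Remark \ref{r.sgstar}(iv)).

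The main obstacle I anticipate is the uniform control of $G_{\psi_k}$ near the origin: away from $0$, uniform convergence of $G_{\psi_k}$ to $G_f$ on an annulus $\{r_0\le\|z\|\le r_m\}$ is immediate from $\psi_k\to f$, but near $0$ the denominator $\|z\|^2$ vanishes, so one must separately invoke Cauchy estimates on $(\psi_k)$ to show that $G_{\psi_k}(z)$ remains close to $1$ uniformly in (large) $k$ and small $z$. Once that technicality is handled, the scaling argument of Theorems \ref{tspst} and \ref{convaut} carries over with only cosmetic changes.
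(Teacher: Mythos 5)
Your proposal is correct and follows the paper's proof in all essentials: Runge-ness of $f(\Bn)$, the Anders\'en--Lempert approximation by normalized automorphisms, the dilation $\varphi_m=\frac{1}{r_m}\psi_{k_m}(r_m\cdot)$, the observation that $G_f(\overline{r_m\Bn})$ sits compactly inside the open set $g(\U)$, and compactness of $S_g^*(\Bn)$ for the reverse inclusion. The only (cosmetic) difference is in the treatment of the singular quotient at the origin: the paper handles it in one stroke by setting $\alpha_k=[Df]^{-1}f-[D\psi_k]^{-1}\psi_k$, noting $\alpha_k(0)=0$, and writing $\alpha_k(z)=\int_0^1 D\alpha_k(tz)(z)\,dt$ so that $|\langle\alpha_k(z),z\rangle|/\|z\|^2\le\sup_{\zeta\in r\Bn}\|D\alpha_k(\zeta)\|\to 0$ uniformly on $r\Bn\setminus\{0\}$, whereas you split into a near-origin region (controlled by Cauchy estimates) and an annulus; both versions of this step are sound.
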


\begin{proof}
First, note that the family $S_g^*(\B^n)$ is compact, in view of Remark \ref{r.sgstar} (iv).
We shall use arguments similar to those in the proof of Theorem \ref{tspst}.
Let $f\in S^*_g(\Bn)$ and let $(\psi_k)_{k\in\mathbb{N}}$ be a sequence
in ${\rm Aut}(\Cn)$ that converges locally uniformly on $\Bn$ to $f$.
We may assume that $\psi_k$ are normalized.
First, we observe that, for every $r\in(0,1)$, there exists $\delta\in(0,1)$ such that
$\frac{1}{\|z\|^2}\langle [Df(z)]^{-1}f(z),z\rangle\in g(\delta\U)$, for $z\in r\Bn\setminus\{0\}$.
Next, for every $k\in\N$, let
$\alpha_k:\Bn\to\Cn$ be given by
$$\alpha_k(z)=[Df(z)]^{-1}f(z)-[D\psi_k(z)]^{-1}\psi_k(z),\quad z\in\Bn.$$
Taking into account the kernel convergence result given in \cite[Theorem 2.17]{DGHK},
we deduce that for every $r\in(0,1)$,
$$\frac{1}{\|z\|^2}\Big|\langle {\alpha_k(z)},{z}\rangle\Big|=
\left|\frac{1}{\|z\|^2}\Big\langle\int_0^1 D\alpha_k(tz)(z)dt,z\Big\rangle\right|\leq\sup_{\zeta\in r\Bn}\left\|D\alpha_k(\zeta)\right\|\to 0,$$
as $k\to\infty$, uniformly with respect to $z\in r\B^n\setminus\{0\}$.

Let $(r_m)_{m\in\mathbb{N}}$ be a sequence in $(0,1)$ with $r_m\to 1$.
Fix $m\in\mathbb{N}$. From the above arguments, there exists $k_m\in\N$
such that $\frac{1}{\|z\|^2}\langle \big(D\varphi_m(z)\big)^{-1}\varphi_m(z),z\rangle\in g(\U)$,
for all $z\in \Bn\setminus\{0\}$,
where $\varphi_m(z)=\frac{1}{r_m}\psi_{k_m}(r_mz)$, for $z\in\Bn$.
The sequence $(k_m)_{m\in\N}$ may be chosen to be increasing.
Hence, we obtain a sequence $(\varphi_m)_{m\in\mathbb{N}}$ in $S^*_g(\Bn)\cap\mathcal{A}(\Bn)$
that converges locally uniformly on $\Bn$ to $f$. This completes the proof.
 \end{proof}

In particular, from Theorem \ref{grp}, we obtain the following consequence:

\begin{corollary}
\label{c.grp}
$(i)$ If $\alpha\in (0,1)$, then
$S^*_\alpha(\Bn)=\overline{S^*_\alpha(\Bn)\cap\mathcal{A}(\Bn)}$, $n\geq 2$.

$(ii)$ If $\alpha\in (0,1]$, then
$SS^*_\alpha(\Bn)=\overline{SS^*_\alpha(\Bn)\cap\mathcal{A}(\Bn)}$, $n\geq 2$.
\end{corollary}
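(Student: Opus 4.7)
The plan is to deduce both parts directly from Theorem \ref{grp} by choosing the appropriate function $g$ identified in Remark \ref{r.sgstar}. There is essentially no new analytic content; what must be checked is only that each candidate $g$ meets the hypotheses of Theorem \ref{grp}, namely that $g:\U\to\C$ is univalent, $g(0)=1$ and $\Re g(\zeta)>0$ for $\zeta\in\U$.

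For part $(i)$, I would set $g(\zeta)=\frac{1-\zeta}{1+(1-2\alpha)\zeta}$, which by Remark \ref{r.sgstar}$(ii)$ satisfies $S_g^*(\Bn)=S_\alpha^*(\Bn)$. The map $g$ is a M\"obius transformation (hence univalent) since $\alpha\in(0,1)$ forces $|1-2\alpha|<1$, so the pole $-\frac{1}{1-2\alpha}$ lies outside $\overline{\U}$ (the case $\alpha=1/2$ being trivial). Clearly $g(0)=1$, and $g$ maps $\U$ onto an open disc tangent to or contained in the right half-plane; a brief verification on $|\zeta|=1$ confirms $\Re g(\zeta)>0$ throughout $\U$. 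Theorem \ref{grp} applied to this $g$ then gives the claim.

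For part $(ii)$, I would set $g(\zeta)=\bigl(\frac{1-\zeta}{1+\zeta}\bigr)^\alpha$ (principal branch), which by Remark \ref{r.sgstar}$(iii)$ satisfies $S_g^*(\Bn)=SS_\alpha^*(\Bn)$. The Cayley transform $\zeta\mapsto\frac{1-\zeta}{1+\zeta}$ is a univalent map of $\U$ onto the right half-plane; composing with the principal $\alpha$-th power (univalent on the right half-plane since $\alpha\in(0,1]$) yields a univalent map from $\U$ onto the open sector $\{w\neq 0:|\arg w|<\alpha\pi/2\}$, which is contained in the right half-plane. In particular $g$ is univalent, $g(0)=1$ and $\Re g(\zeta)>0$ on $\U$. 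Theorem \ref{grp} then yields the conclusion.

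There is no real obstacle here; the only point requiring care is confirming that the candidate $g$ in each part indeed satisfies $\Re g>0$ on $\U$ and is univalent, after which the corollary is an immediate specialization of Theorem \ref{grp}.
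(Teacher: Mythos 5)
Your proposal is correct and takes essentially the same route as the paper: the paper presents Corollary \ref{c.grp} as an immediate consequence of Theorem \ref{grp} specialized to the choices of $g$ recorded in Remark \ref{r.sgstar}$(ii)$ and $(iii)$, with no separate proof given. Your additional verification that each $g$ is univalent, has $g(0)=1$, and satisfies $\Re g>0$ on $\U$ is accurate and merely makes explicit what the paper leaves implicit.
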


\begin{theorem}
\label{p2}
Let $T\in (0,\infty)$ and let $A\in L(\Cn)$ be a nonresonant operator with $m(A)>0$. If $n\ge 2$, then
$\widetilde{\mathcal{R}}_T({\rm id}_{\Bn},\na)=\ov{\widetilde{\mathcal{R}}_T({\rm id}_{\Bn},\na)
\cap \mathcal{A}(\Bn)}$.
\end{theorem}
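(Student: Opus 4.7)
\medskip

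The inclusion $\ov{\widetilde{\mathcal R}_T(\mathrm{id}_{\Bn},\na)\cap\mathcal A(\Bn)}\subseteq \widetilde{\mathcal R}_T(\mathrm{id}_{\Bn},\na)$ is immediate from Proposition \ref{reachcii}, since $\widetilde{\mathcal R}_T(\mathrm{id}_{\Bn},\na)$ is closed in $H(\Bn)$. For the reverse inclusion, my plan is to imitate the dilation scheme used in the proofs of Theorems \ref{tspst} and \ref{convaut}: given $\varphi\in\widetilde{\mathcal R}_T(\mathrm{id}_{\Bn},\na)$, approximate it locally uniformly by dilates of normalized automorphisms, and then certify the approximants lie in $\widetilde{\mathcal R}_T(\mathrm{id}_{\Bn},\na)$ by producing, for each of them, an $A$-normalized univalent subordination chain that terminates at $e^{tA}\mathrm{id}_{\Bn}$ for $t\geq T$ (cf.\ Proposition \ref{reachci}).

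Fix $\varphi\in\widetilde{\mathcal R}_T(\mathrm{id}_{\Bn},\na)$. By Proposition \ref{reachci} there is an $A$-normalized univalent subordination chain $f$ with $f(\cdot,0)=\varphi$ and $f(\cdot,t)=e^{tA}\mathrm{id}_{\Bn}$ for all $t\geq T$; in particular the Loewner range is $R(f)=\Cn$, and the transition Schwarz map $v_{0,T}:=e^{-TA}\varphi:\Bn\to\Bn$ is univalent with $v_{0,T}(0)=0$ and $Dv_{0,T}(0)=e^{-TA}$. Because $\varphi\in\widetilde{\mathcal R}_T\subseteq S^0_A(\Bn)$ (Remark \ref{rrf}) and $R(f)=\Cn$, $\varphi(\Bn)$ is Runge. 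Hence by \cite[Theorem 2.1]{And-Lemp} there exists a sequence $(\psi_k)\subset\mathrm{Aut}(\Cn)$ converging locally uniformly to $\varphi$ on $\Bn$, and, exactly as in the proof of Theorem \ref{tspst}, each $\psi_k$ may be assumed normalized.

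I next choose $r_m\nearrow 1$ and indices $k_m$ so that $\varphi_m(z):=r_m^{-1}\psi_{k_m}(r_mz)\to\varphi$ locally uniformly on $\Bn$. Since $\|e^{-TA}\|\leq e^{-Tm(A)}<1$, the strict Schwarz lemma on $\Bn$ gives $v_{0,T}(\ov{r_m\Bn})\Subset r_m\Bn$, so $\varphi(\ov{r_m\Bn})\Subset r_m\,e^{TA}(\Bn)$. Local uniform convergence of $\psi_{k_m}$ to $\varphi$ on $\ov{r_m\Bn}$ then allows me to enlarge $k_m$ so that $\psi_{k_m}(\ov{r_m\Bn})\subseteq r_m\,e^{TA}(\Bn)$; equivalently,
$$u_m:=e^{-TA}\varphi_m:\Bn\to\Bn$$
is a univalent holomorphic self-map with $u_m(\ov{\Bn})\Subset\Bn$, $u_m(0)=0$, $Du_m(0)=e^{-TA}$, and $u_m\to v_{0,T}$ locally uniformly on $\Bn$.

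The main obstacle is the final step: showing $\varphi_m\in\widetilde{\mathcal R}_T$. By Proposition \ref{reachci} this reduces to exhibiting an $A$-normalized univalent subordination chain $f_m$ on $\Bn\times[0,\infty)$ with $f_m(\cdot,0)=\varphi_m$ and $f_m(\cdot,t)=e^{tA}\mathrm{id}_{\Bn}$ for $t\geq T$. Writing $f_m(z,t)=e^{tA}g_m(z,t)$ translates this into the problem of constructing a univalent subordination chain $g_m(\cdot,t):\Bn\to\Bn$ on $[0,T]$ with $g_m(\cdot,0)=u_m$, $g_m(\cdot,T)=\mathrm{id}_{\Bn}$, and $Dg_m(0,t)=e^{(t-T)A}$; equivalently, an increasing family of simply connected subdomains $g_m(\Bn,t)\subseteq\Bn$ interpolating between $u_m(\Bn)$ at $t=0$ and $\Bn$ at $t=T$, with biholomorphic parametrizations whose derivatives at the origin match $e^{(t-T)A}$. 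Since $u_m(\ov{\Bn})\Subset\Bn$, such an interpolating chain can be produced by a Pommerenke-type embedding; the nonresonance of $A$ is used at the very last step to guarantee that the Herglotz field $h_m$ associated to $g_m$ via Remark \ref{Lpde}(i) has the correct linearization $Dh_m(0,t)=A$, hence belongs to $\mathcal{C}([0,T],\na)$ (cf.\ Remark \ref{spir2}). Once this is accomplished, $\varphi_m\in\widetilde{\mathcal R}_T\cap\mathcal A(\Bn)$, and $\varphi_m\to\varphi$ completes the proof.
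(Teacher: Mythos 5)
Your approach diverges fundamentally from the paper's, and the decisive step is left without justification. Let me describe the gap and contrast the two strategies.

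The easy inclusion via Proposition \ref{reachcii}, the Runge property of $\varphi(\Bn)$, the Anders\'en--Lempert approximation $\psi_k\to\varphi$, and the dilation trick producing normalized $\varphi_m\in\mathcal{A}(\Bn)$ with $u_m:=e^{-TA}\varphi_m$ a strict univalent self-map of $\Bn$ are all fine. The problem is the last paragraph: you need to exhibit, for each $m$, an $A$-normalized univalent subordination chain $g_m(\cdot,t):\Bn\to\Bn$ with $g_m(\cdot,0)=u_m$, $g_m(\cdot,T)=\mathrm{id}_{\Bn}$, and $Dg_m(0,t)=e^{(t-T)A}$, and you invoke an unspecified ``Pommerenke-type embedding.'' In $\Cn$ with $n\ge 2$ this is not a known routine construction. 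Given an increasing family of domains $\Omega_t=g_m(\Bn,t)$ interpolating $u_m(\Bn)$ and $\Bn$, the biholomorphisms $g_m(\cdot,t)$ are determined only up to precomposition by unitaries, so you cannot freely prescribe $Dg_m(0,t)=e^{(t-T)A}$ for every $t\in(0,T)$; matching this linearization along the whole interval is a genuine constraint, and asserting its solvability is essentially the statement $\varphi_m\in\widetilde{\mathcal{R}}_T(\mathrm{id}_{\Bn},\na)$ that you are trying to establish. (Also, a small computational slip: with $f_m(z,t)=e^{tA}g_m(z,t)$ one gets $Dg_m(0,t)=I_n$ and $g_m(\cdot,0)=\varphi_m$; the factorization you intend is $f_m(z,t)=e^{TA}g_m(z,t)$.) Moreover, your stated role for nonresonance is incorrect: by Remark \ref{Lpde}(i), any $A$-normalized univalent subordination chain automatically has a Herglotz field in $\mathcal{C}(\cdot,\na)$, so no nonresonance is needed to secure $Dh_m(0,t)=A$.

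The paper's proof avoids the embedding issue entirely by approximating the Herglotz field rather than the mapping. Writing $\varphi=e^{TA}v(\cdot,T;h)$ with $h\in\mathcal{C}([0,T],\na)$, one invokes \cite[Lemma 4.12]{GHKK13} to approximate $h$ a.e.\ by piecewise constant Carath\'eodory mappings $h_k$ with $e^{TA}v(\cdot,T;h_k)\to\varphi$ locally uniformly. Nonresonance enters exactly here, through Remark \ref{spir2}(ii): each constant value of $h_k$ can be taken of the form $z\mapsto(D\alpha(z))^{-1}A\alpha(z)$ with $\alpha\in\widehat{S}_A(\Bn)\cap\mathcal{A}(\Bn)$ (using Theorem \ref{tspst}). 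The flow of such a constant field is $v(z,t)=\alpha^{-1}(e^{-tA}\alpha(z))$, which extends to an automorphism of $\Cn$, and composing along the pieces shows $e^{TA}v(\cdot,T;h_k)\in\mathcal{A}(\Bn)\cap\widetilde{\mathcal{R}}_T(\mathrm{id}_{\Bn},\na)$. That is the step your proposal lacks a substitute for.
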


\begin{proof}
Let $f\in \widetilde{\mathcal{R}}_T({\rm id}_{\Bn},\na)$. Then there is $h\in \mathcal{C}([0,T],\na)$
with $f=e^{TA}v(\cdot,T;h)$. From \cite[Lemma 4.12]{GHKK13}, the mapping
$h$ can be approximated pointwise almost everywhere on $[0,T]$ by a sequence $(h_k)_{k\in\mathbb{N}}$
of piecewise
constant Carath\'eodory mappings with values in $\na$ such that $e^{TA}v(\cdot,T;h_k)\to e^{TA}v(\cdot,T;h)$, as $k\to\infty$,
locally uniformly on $\Bn$. In view of Remark $\ref{spir2}$ and Theorem \ref{tspst}, we can choose the constants
associated to $h_k$ of the following form: $z\mapsto\big(D\alpha(z)\big)^{-1}A\alpha(z)$ with $\alpha\in\mathcal{A}(\Bn)\cap \widehat{S}_A(\Bn)$,
for all $k\in\mathbb{N}$. Then it can be easily proved that $e^{TA}v(\cdot,T;h_k)\in \mathcal{A}(\Bn)$, for all $k\in\mathbb{N}$.
Thus $f\in\ov{\widetilde{\mathcal{R}}_T({\rm id}_{\Bn},\na)\cap \mathcal{A}(\Bn)}$.

The reversed inclusion follows by Proposition \ref{reachcii}.
 \end{proof}

In view of Theorem \ref{p2}, we obtain the following density result related to the family
$\widetilde{\mathcal{R}}_T({\rm id}_{\Bn},\mathcal M)$.

\begin{corollary}
\label{c.reachable}
Assume that $T>0$ and $n\ge 2$. Then
$$\widetilde{\mathcal{R}}_T({\rm id}_{\Bn},{\mathcal M})=\ov{\widetilde{\mathcal{R}}_T({\rm id}_{\Bn},{\mathcal M})
\cap \mathcal{A}(\Bn)}.$$
\end{corollary}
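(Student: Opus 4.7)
The plan is to observe that this corollary is an immediate specialization of Theorem \ref{p2} to the case $A = I_n$, so the entire task reduces to verifying that the operator $I_n$ satisfies the hypotheses of that theorem, namely that it is nonresonant with $m(I_n) > 0$. Once this is done, the conclusion is obtained by noting that $\mathcal M = \mathcal N_{I_n}$ by definition.

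For the strict numerical range lower bound, I would simply compute
$$m(I_n) = \min\{\Re\langle I_n z, z\rangle : \|z\|=1\} = \min\{\|z\|^2 : \|z\| = 1\} = 1 > 0.$$
For the nonresonance, I would use Definition \ref{reson}: the eigenvalues of $I_n$ form the $n$-tuple $(1,\ldots,1)$, so a resonance relation $\lambda_s = \sum_{j=1}^n m_j \lambda_j$ with $\sum_{j=1}^n m_j \geq 2$ would read
$$1 = \sum_{j=1}^n m_j \cdot 1 = \sum_{j=1}^n m_j \geq 2,$$
which is a contradiction. Hence $I_n$ is nonresonant.

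Having verified both hypotheses, I would invoke Theorem \ref{p2} with $A = I_n$ to obtain
$$\widetilde{\mathcal R}_T(\mathrm{id}_{\Bn}, \mathcal N_{I_n}) = \overline{\widetilde{\mathcal R}_T(\mathrm{id}_{\Bn}, \mathcal N_{I_n}) \cap \mathcal A(\Bn)},$$
and conclude by rewriting $\mathcal N_{I_n} = \mathcal M$. There is no substantive obstacle here; the corollary is purely a matter of unpacking notation and checking that the identity operator trivially sits within the scope of the preceding theorem.
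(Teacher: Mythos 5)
Your proposal is correct and matches the paper's intent exactly: the corollary is stated immediately after Theorem \ref{p2} as a direct specialization to $A = I_n$, and your verification that $m(I_n)=1>0$, that $I_n$ is nonresonant, and that $\mathcal{M}=\mathcal{N}_{I_n}$ is precisely what is needed to invoke that theorem.
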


The next result is a generalization of \cite[Corollary 2.3]{I1}
and \cite[Corollary 2.6.10, Question 2.6.11]{S} to the case of mappings with
$A$-parametric representation on $\Bn$.

\begin{theorem}
\label{p3}
Let $A\in L(\Cn)$ be a nonresonant operator such that $m(A)>0$. If $n\ge 2$, then
$$\ov{S^0_A(\Bn)}=\ov{S^0_A(\Bn)\cap \mathcal{A}(\Bn)}.$$
\end{theorem}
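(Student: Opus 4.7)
The plan is to reduce the statement to the reachable-family density already established in Theorem~\ref{p2} by expressing $\ov{S^0_A(\Bn)}$ as the closure of the ascending union of the time-$T$-reachable families.

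First, I would establish the identity
\[
\ov{S^0_A(\Bn)}=\ov{\bigcup_{T>0}\widetilde{\mathcal{R}}_T({\rm id}_{\Bn},\na)}.
\]
The inclusion $\supseteq$ is immediate from Remark~\ref{rrf}. For $\subseteq$, pick $f\in S^0_A(\Bn)$ and let $h\in\mathcal{C}([0,\infty),\na)$ be a Herglotz vector field witnessing this, so that $f=\lim_{T\to\infty}e^{TA}v(\cdot,T;h)$ locally uniformly on $\Bn$. Writing $h_T:=h|_{[0,T]}\in\mathcal{C}([0,T],\na)$, each approximant $e^{TA}v(\cdot,T;h_T)$ belongs to $\widetilde{\mathcal{R}}_T({\rm id}_{\Bn},\na)$, and so $f\in\ov{\bigcup_{T>0}\widetilde{\mathcal{R}}_T({\rm id}_{\Bn},\na)}$.

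Next, I would apply Theorem~\ref{p2} termwise. Since $A$ is nonresonant with $m(A)>0$ and $n\geq 2$, Theorem~\ref{p2} gives, for every $T>0$,
\[
\widetilde{\mathcal{R}}_T({\rm id}_{\Bn},\na)=\ov{\widetilde{\mathcal{R}}_T({\rm id}_{\Bn},\na)\cap\mathcal{A}(\Bn)}\subseteq\ov{S^0_A(\Bn)\cap\mathcal{A}(\Bn)},
\]
where the inclusion uses $\widetilde{\mathcal{R}}_T({\rm id}_{\Bn},\na)\subseteq S^0_A(\Bn)$ once more. Taking the union over $T>0$ and then the closure,
\[
\ov{\bigcup_{T>0}\widetilde{\mathcal{R}}_T({\rm id}_{\Bn},\na)}\subseteq\ov{S^0_A(\Bn)\cap\mathcal{A}(\Bn)}.
\]
Combining this with the identity in the previous paragraph yields $\ov{S^0_A(\Bn)}\subseteq\ov{S^0_A(\Bn)\cap\mathcal{A}(\Bn)}$. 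The reverse inclusion is trivial since $S^0_A(\Bn)\cap\mathcal{A}(\Bn)\subseteq S^0_A(\Bn)$, and this completes the argument.

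I do not expect any real obstacle here; all the technical work has been absorbed into Theorem~\ref{p2}. The only small point requiring care is the $\subseteq$ direction of the identity, where one must observe that the natural approximants $e^{TA}v(\cdot,T;h)$ genuinely lie in the time-$T$-reachable family associated with the truncated Herglotz field $h|_{[0,T]}$, which is automatic from Definition~\ref{Rf} and the construction of $v$.
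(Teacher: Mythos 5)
Your proposal is correct and uses essentially the same approach as the paper: approximate $f\in S^0_A(\Bn)$ by $e^{TA}v(\cdot,T;h)\in\widetilde{\mathcal{R}}_T({\rm id}_{\Bn},\na)$, then invoke Theorem~\ref{p2} to replace each of these by nearby automorphism restrictions and Remark~\ref{rrf} to confirm those restrictions lie in $S^0_A(\Bn)$. The paper phrases this as an explicit diagonal-sequence construction (choosing $\varphi_m\in\widetilde{\mathcal{R}}_m\cap\mathcal{A}(\Bn)$ within distance $1/m$ of $e^{mA}v(\cdot,m;h)$), whereas you package it as a closure identity $\ov{S^0_A(\Bn)}=\ov{\bigcup_{T>0}\widetilde{\mathcal{R}}_T({\rm id}_{\Bn},\na)}$, but the ingredients and logical content are the same.
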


\begin{proof}
Let $f\in S^0_A(\Bn)$. Then there is $h\in \mathcal{C}([0,\infty),\na)$
with $f=\lim_{t\to\infty}e^{tA}v(\cdot,t;h)$, locally uniformly on $\Bn$.
In particular, the sequence of mappings  $(e^{mA}v(\cdot,m;h))_{m\in\mathbb{N}}$
converges locally uniformly to $f$. Since $A$ is nonresonant,
Theorem $\ref{p2}$
implies that for every $m\in\mathbb{N}$ there exists
$\varphi_m\in\widetilde{\mathcal{R}}_m({\rm id}_{\Bn},\na)\cap\mathcal{A}(\Bn)$
such that the distance (with respect to the compact open topology on $H(\Bn)$)
between $e^{mA}v(\cdot,m;h)$ and $\varphi_m$ is less than $\frac{1}{m}$.
Taking into account Remark \ref{rrf}, we have that $(\varphi_{m})_{m\in\mathbb{N}}$
is a sequence in $S^0_A(\Bn)\cap \mathcal{A}(\Bn)$ that converges locally uniformly to $f$.
Hence $S^0_A(\Bn)\subseteq\ov{S^0_A(\Bn)\cap \mathcal{A}(\Bn)}$. The reversed inclusion is obvious.
 \end{proof}

In particular, if $A\in L(\Cn)$ is such that $k_+(A)<2m(A)$, then $A$ is nonresonant,
and in view of Theorem \ref{p3},
we obtain the following consequence (see \cite[Corollary 2.3]{I1}
and \cite[Corollary 2.6.10, Question 2.6.11]{S}, in the case $A=I_n$).

\begin{corollary}
\label{c.density1}
Let $A\in L(\C^n)$ be such that $k_+(A)<2m(A)$. If $n\geq 2$, then
$$S_A^0(\Bn)=\ov{S_A^0(\Bn)\cap \mathcal{A}(\Bn)}.$$
\end{corollary}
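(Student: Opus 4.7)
The plan is to obtain Corollary \ref{c.density1} as a direct consequence of Theorem \ref{p3}, exploiting the fact that the strengthened hypothesis $k_+(A)<2m(A)$ delivers two bonuses beyond mere nonresonance: it lets us drop the outer closure on the left-hand side of the identity supplied by Theorem \ref{p3}. So the proposal splits into three steps: (i) verify nonresonance, (ii) invoke Theorem \ref{p3} to get equality of closures, and (iii) argue that $S_A^0(\B^n)$ is closed so that the left closure is superfluous.

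For step (i), let $\lambda_1,\ldots,\lambda_n$ denote the eigenvalues of $A$. For each eigenvalue $\lambda_j$ with unit eigenvector $u_j$, one has $\Re\lambda_j=\Re\langle Au_j,u_j\rangle\ge m(A)$. Hence for any $s\in\{1,\ldots,n\}$ and $m_1,\ldots,m_n\in\mathbb{N}\cup\{0\}$ with $\sum_j m_j\ge 2$,
\[
\Re\sum_{j=1}^n m_j\lambda_j=\sum_{j=1}^n m_j\Re\lambda_j\ge 2m(A)>k_+(A)\ge \Re\lambda_s,
\]
so $\lambda_s\ne\sum_j m_j\lambda_j$. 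Thus $A$ is nonresonant (and in fact has no real resonances), and Theorem \ref{p3} applies, yielding
\[
\overline{S^0_A(\B^n)}=\overline{S^0_A(\B^n)\cap\mathcal{A}(\B^n)}.
\]

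For step (iii), I would recall that under the assumption $k_+(A)<2m(A)$ the family $S_A^0(\B^n)$ is a compact (and in particular closed) subset of $H(\B^n)$; this is the classical compactness result of Graham--Hamada--Kohr--Kohr (see \cite{GHKK-JAM}), which is exactly where the strictness $k_+(A)<2m(A)$ is essential and which fails in general under the weaker nonresonance hypothesis of Theorem \ref{p3}. Combining this closedness with the equality of closures gives
\[
S_A^0(\B^n)=\overline{S_A^0(\B^n)}=\overline{S_A^0(\B^n)\cap\mathcal{A}(\B^n)},
\]
as desired; the reverse inclusion $\overline{S_A^0(\B^n)\cap\mathcal{A}(\B^n)}\subseteq S_A^0(\B^n)$ is immediate from closedness.

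The main obstacle is therefore not in the density argument itself — Theorem \ref{p3} does all the heavy lifting — but in having the compactness of $S_A^0(\B^n)$ available under $k_+(A)<2m(A)$. Provided one quotes this from \cite{GHKK-JAM}, the proof reduces to the two short verifications above, and the step that genuinely needs the extra quantitative input beyond nonresonance is precisely the removal of the outer closure on the left.
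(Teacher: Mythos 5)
Your proof is correct and follows essentially the same route the paper takes: the paper derives the corollary from Theorem~\ref{p3} by observing that $k_+(A)<2m(A)$ implies $A$ is nonresonant, with the removal of the outer closure left implicit and resting on the compactness of $S_A^0(\Bn)$ under $k_+(A)<2m(A)$ from \cite{GHKK-JAM}. Your write-up simply makes both of those implicit steps explicit, including the clean eigenvalue estimate $\Re\lambda_j\ge m(A)$ that rules out real resonances.
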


\begin{corollary}
\label{c.density2}
Let $A\in L(\C^n)$ be such that $k_+(A)<2m(A)$. If $n\geq 2$, then
$$\ov{S_A^1(\Bn)}=\ov{S_A^1(\Bn)\cap \mathcal{A}(\Bn)}.$$
\end{corollary}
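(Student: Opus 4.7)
The plan is to deduce Corollary \ref{c.density2} from Corollary \ref{c.density1} via the identification $\ov{S_A^1(\Bn)}=\ov{S_A^0(\Bn)}$ under the hypothesis $k_+(A)<2m(A)$. Once this is in hand, combining the trivial inclusion $S_A^0(\Bn)\cap\mathcal{A}(\Bn)\subseteq S_A^1(\Bn)\cap\mathcal{A}(\Bn)$ (which follows from $S_A^0\subseteq S_A^1$) with the closedness of $S_A^0(\Bn)$ supplied by Corollary \ref{c.density1} yields the chain
$$\ov{S_A^1(\Bn)}=\ov{S_A^0(\Bn)}=S_A^0(\Bn)=\ov{S_A^0(\Bn)\cap\mathcal{A}(\Bn)}\subseteq\ov{S_A^1(\Bn)\cap\mathcal{A}(\Bn)}\subseteq\ov{S_A^1(\Bn)},$$
forcing equality throughout.

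Since $S_A^0(\Bn)\subseteq S_A^1(\Bn)$ always, only the inclusion $S_A^1(\Bn)\subseteq\ov{S_A^0(\Bn)}$ requires work. Given $f\in S_A^1(\Bn)$, let $L$ be an associated $A$-normalized univalent subordination chain with $R(L)=\Cn$, and let $(v_{s,t})_{t\ge s\ge 0}$ be its transition mappings, which by Remark \ref{Lpde} (ii) solve (\ref{lode2}) for the Herglotz vector field $h$ produced by Remark \ref{Lpde} (i). Set $u_t(z):=e^{tA}v_{0,t}(z)$; each $u_t$ is normalized on $\Bn$, and proving $u_t\to f$ locally uniformly as $t\to\infty$ shows $f\in S_A^0(\Bn)$ directly from Definition \ref{SA-d}. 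Writing $\tilde L(w,t):=e^{-tA}L(w,t)$ and using $f(z)=L(v_{0,t}(z),t)$, one has
$$f(z)-u_t(z)=e^{tA}\left(\tilde L(v_{0,t}(z),t)-v_{0,t}(z)\right),$$
so the convergence reduces to a quadratic bound $\|\tilde L(w,t)-w\|\le C\|w\|^2$ uniform in $t$, combined with the decay $\|v_{0,t}(z)\|\le C(\|z\|)e^{-(m(A)-\varepsilon)t}$ extracted from (\ref{growth1}) and (\ref{lode2}), and the spectral growth $\|e^{tA}\|\le e^{(k_+(A)+\varepsilon)t}$ valid for $t$ large.

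The main obstacle is the uniform-in-$t$ quadratic estimate on $\tilde L(\cdot,t)-{\rm id}$: for a generic normalized univalent mapping on $\Bn$ no such bound is available, so one must exploit the semigroup identity $\tilde L(v_{s,t}(z),t)=e^{(s-t)A}\tilde L(z,s)$ (which follows from $L(v_{s,t}(z),t)=L(z,s)$) together with the hypothesis $k_+(A)<2m(A)$ to force the family $(\tilde L(\cdot,t))_{t\ge 0}$ to be locally uniformly bounded. This is essentially the well-known characterization of $S_A^0(\Bn)$ as the set of $f\in S_A^1(\Bn)$ for which $(e^{-tA}L(\cdot,t))_{t\ge 0}$ is a normal family, together with the automatic normality of this family under $k_+(A)<2m(A)$. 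Once this step is cleared, the exponent $k_+(A)-2m(A)+3\varepsilon$ in the product estimate is negative for sufficiently small $\varepsilon$, giving $u_t\to f$ locally uniformly, hence $f\in S_A^0(\Bn)$, and the displayed chain of inclusions closes the argument.
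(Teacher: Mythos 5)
The plan hinges on the claimed identity $\ov{S_A^1(\Bn)}=\ov{S_A^0(\Bn)}$, and that identity is false. Under $k_+(A)<2m(A)$ the family $S_A^0(\Bn)$ is compact, so $\ov{S_A^0(\Bn)}=S_A^0(\Bn)$; combined with $S_A^0(\Bn)\subseteq S_A^1(\Bn)$, your identity would force $S_A^1(\Bn)=S_A^0(\Bn)$, which fails for every $n\geq 2$. By \cite[Theorem 3.1]{DGHK}, $f\in S_A^1(\Bn)$ iff $f=\Phi\circ f_0$ for some $f_0\in S_A^0(\Bn)$ and some normalized $\Phi\in{\rm Aut}(\Cn)$, so taking $f_0={\rm id}_{\Bn}$ and $\Phi$ a shear such as $\Phi_a(w_1,w_2,\dots)=(w_1+aw_2^2,w_2,\dots)$ with $|a|$ large produces elements of $S_A^1(\Bn)$ whose second-order Taylor coefficients exceed the coefficient bounds holding uniformly on the compact set $S_A^0(\Bn)$. (This is consistent with Remark~\ref{r.a-parametric}(i), which identifies $\ov{S_A^1(\Bn)}$ with $S_R(\Bn)$, a strictly larger and non-compact set.) Consequently the first equality in your displayed chain of inclusions is unjustifiable, and the argument does not close.

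The place where your proof of $S_A^1(\Bn)\subseteq\ov{S_A^0(\Bn)}$ goes wrong is exactly the passage you flag as the "main obstacle": you assert that $k_+(A)<2m(A)$ together with the semigroup identity forces $(e^{-tA}L(\cdot,t))_{t\ge 0}$ to be a normal family for \emph{any} $A$-normalized chain $L$ with $R(L)=\Cn$. That is precisely the extra property that separates $S_A^0$ from $S_A^1$, and it is not implied by $k_+(A)<2m(A)$ alone; the hypothesis controls spectral growth of $e^{tA}$ but places no a priori bound on $e^{-tA}L(\cdot,t)$ when $L(\cdot,0)$ has been post-composed with a large automorphism. If the implication were true, the Koebe transform $e^{tA}v_{0,t}$ would converge for every $f\in S_A^1(\Bn)$ and the two classes would coincide, contradicting the shear example above.

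The paper's proof avoids this entirely: it writes $f=\Phi\circ f_0$ with $f_0\in S_A^0(\Bn)$ and $\Phi\in{\rm Aut}(\Cn)$ normalized, approximates $f_0$ by automorphisms $\psi_k\in S_A^0(\Bn)\cap\mathcal{A}(\Bn)$ via Corollary~\ref{c.density1}, and then observes that $\phi_k=\Phi\circ\psi_k$ are normalized automorphisms lying in $S_A^1(\Bn)\cap\mathcal{A}(\Bn)$ (compose the chain for $\psi_k$ with $\Phi$) which converge locally uniformly to $f$. No normality claim for the chain of $f$ itself is needed. If you want to salvage a "reduce $S_A^1$ to $S_A^0$" strategy, you should reduce at the level of the approximating sequence rather than of $f$, which is exactly what the composition $\Phi\circ\psi_k$ accomplishes.
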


\begin{proof}
It suffices to show that $S_A^1(\B^n)\subseteq \ov{{\mathcal A}(\B^n)\cap S_A^1(\B^n)}$.
To this end, let $f\in S_A^1(\B^n)$. Then $f=\Phi\circ f_0$ for some
$f_0\in S_A^0(\B^n)$ and $\Phi\in {\rm Aut}(\C^n)$ such that $\Phi(0)=0$ and $D\Phi(0)=I_n$
(cf. \cite[Theorem 3.1]{DGHK}). Since $f_0\in S_A^0(\B^n)$, there is a sequence $(\psi_k)_{k\in\N}$ in
${\mathcal A}(\B^n)\cap S_A^0(\B^n)$ such that $\psi_k\to f_0$ locally uniformly on $\B^n$, as
$k\to\infty$, by Corollary \ref{c.density1}. Now, if $\phi_k=\Phi\circ \psi_k$, $k\in\N$, then
it is clear that $\phi_k\in {\mathcal A}(\B^n)\cap S_A^1(\B^n)$ and
$\phi_k\to f$ locally uniformly on $\B^n$, as $k\to\infty$. This completes the proof.
 
\end{proof}

\begin{remark}
\label{r1a}
Let $A\in L(\C^2)$ be as in Example \ref{ex3r}. We note that Theorems $\ref{p2}$ and $\ref{p3}$
hold for this $A$, even though $k_+(A)=2m(A)$, because $A$ is nonresonant.
\end{remark}

\begin{question}
\label{q1}
Does there exist a resonant operator $A\in L(\Cn)$, $n\geq 2$, with $m(A)>0$, for which
Theorems $\ref{p2}$ and $\ref{p3}$ remain true?
\end{question}

Next, we prove an embedding property related to the families
$S_A^1(\Bn)$, ${\mathcal A}(\Bn)$, and
$S_R(\Bn)$ (cf. \cite{ABW-Proc}, \cite{I1}, \cite{S}).

\begin{proposition}
\label{rs}
Let $n\geq 2$ and $A\in L(\Cn)$ be such that $m(A)>0$. Then
$$\mathcal{A}(\Bn)\cap S(\Bn)\subseteq S^1_A(\Bn)\subseteq S_R(\Bn)=\ov{\mathcal{A}(\Bn)\cap S(\Bn)}.$$
\end{proposition}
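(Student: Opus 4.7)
The plan is to prove the two inclusions and the concluding equality in order: the first inclusion by an explicit Loewner chain construction, the second by invoking an embedding-type result for Loewner chains with full Loewner range, and the equality by the Andersén--Lempert theorem combined with stability of the Runge property under Carathéodory kernel convergence.

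For $\mathcal{A}(\Bn)\cap S(\Bn)\subseteq S^1_A(\Bn)$, given a normalized $\Phi\in\Aut(\Cn)$ with $f=\Phi|_{\Bn}$, I would set
$$L(z,t):=\Phi(e^{tA}z),\quad z\in\Bn,\ t\ge 0.$$
Since $m(A)>0$, inverting (\ref{growth-exp}) yields $\|e^{-(t-s)A}u\|\le e^{-m(A)(t-s)}\|u\|$, so $v_{s,t}(z):=e^{-(t-s)A}z$ is a univalent Schwarz self-map of $\Bn$ for $0\le s\le t$, and $L(z,s)=L(v_{s,t}(z),t)$ holds by construction; thus $L$ is a univalent subordination chain with transition mappings $v_{s,t}$. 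The identity $DL(0,t)=D\Phi(0)e^{tA}=e^{tA}$ makes $L$ an $A$-normalized chain with $L(\cdot,0)=f$, and $m(A)>0$ also ensures $\bigcup_{t\ge 0}e^{tA}\Bn=\Cn$, so $R(L)=\Phi(\Cn)=\Cn$. Hence $f\in S^1_A(\Bn)$.

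For $S^1_A(\Bn)\subseteq S_R(\Bn)$, let $f\in S^1_A(\Bn)$ with associated $A$-normalized univalent subordination chain $L$ satisfying $R(L)=\Cn$. The time slices $(L(\Bn,t))_{t\ge 0}$ form an increasing exhaustion of $\Cn$, each biholomorphic to $\Bn$. I would invoke the embedding-type result from \cite{ABW-Proc} (compare \cite{Ar}, \cite{Vo}) asserting that in this situation each $L(\Bn,t)$ is Runge in $\Cn$; specializing to $t=0$ yields $f(\Bn)$ Runge, i.e.\ $f\in S_R(\Bn)$. This is the main obstacle of the proof: it rests on a nontrivial external result about polynomial convexity of time slices of Loewner chains with full range, rather than on a direct calculation.

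For the equality $S_R(\Bn)=\ov{\mathcal{A}(\Bn)\cap S(\Bn)}$, the inclusion $\subseteq$ follows from the Andersén--Lempert theorem \cite[Theorem 2.1]{And-Lemp}: for $f\in S_R(\Bn)$ there is $(\Psi_k)\subset\Aut(\Cn)$ with $\Psi_k\to f$ locally uniformly on $\Bn$, and replacing $\Psi_k$ by $[D\Psi_k(0)]^{-1}(\Psi_k(\cdot)-\Psi_k(0))$, as in the proofs of Theorems \ref{tspst} and \ref{convaut}, produces normalized automorphism restrictions in $\mathcal{A}(\Bn)\cap S(\Bn)$ still converging to $f$. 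For the reverse inclusion, if $\psi_k=\Phi_k|_{\Bn}\to f$ with normalized $\Phi_k\in\Aut(\Cn)$, the first two inclusions already proved give $\psi_k\in S^1_A(\Bn)\subseteq S_R(\Bn)$, so each $\Phi_k(\Bn)$ is Runge; by \cite[Theorem 2.17]{DGHK} the images $\Phi_k(\Bn)$ converge to $f(\Bn)$ in the Carathéodory-kernel sense, and since Runge-ness is preserved under this kernel convergence, $f(\Bn)$ is Runge, so $f\in S_R(\Bn)$.
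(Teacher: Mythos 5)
Your proof follows the paper's argument essentially step for step: the same chain $L(z,t)=\Phi(e^{tA}z)$ (with $R(L)=\Cn$ forced by $m(A)>0$) for the first inclusion, an appeal to the literature for the Runge property of the slices of a full-range Loewner chain for the second (the paper cites the proof of \cite[Theorem 5.1]{Ha2} together with \cite[Satz 17--19]{Doc-Gra} rather than \cite{ABW-Proc}), and the Anders\'en--Lempert theorem plus \cite[Corollary 3.3]{ABW-Proc} for the two inclusions making up the final equality. The one caution is that both of your black-box appeals must be quoted in their precise form: the Runge-ness of the slices does \emph{not} follow from their being an increasing exhaustion of $\Cn$ by balls (that alone is false), but from the filtering property of the slices supplied by the Loewner ODE combined with Docquier--Grauert; likewise ``Runge-ness is preserved under Carath\'eodory kernel convergence'' is not a general principle but exactly the content of \cite[Corollary 3.3]{ABW-Proc} (cf. \cite[p. 372]{And-Lemp}) for locally uniform limits of automorphisms.
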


\begin{proof}
For the first inclusion, let $\Phi\in {\rm Aut}(\Cn)$
be such that $\Phi(0)=0$ and $D\Phi(0)=I_n$. Also, let $L:\Bn\times [0,\infty)\to \Cn$
be given by $L(z,t)=\Phi(e^{tA}z)$,
for $z\in\Bn$ and $t\ge0$. Then $L$
is an $A$-normalized univalent subordination chain. Since $\|e^{tA}z\|\ge e^{m(A)t}\|z\|$,
for $z\in\Bn$, $t\ge0$, by (\ref{growth-exp}), and since $m(A)>0$, we deduce that $R(L)=\Cn$, and thus $L(\cdot,0)=\Phi\big|_{\Bn}\in S^1_A(\Bn)$.
In view of the proof of \cite[Theorem 5.1]{Ha2} and \cite[Satz 17-19]{Doc-Gra}, we have $S^1_A(\Bn)\subseteq S_R(\Bn)$.
The inclusion $S_R(\Bn)\subseteq\ov{\mathcal{A}(\Bn)\cap S(\Bn)}$
is a consequence of \cite[Theorem 2.1]{And-Lemp} (see also \cite[Theorem 1.1]{FR}). 
On the other hand, taking into account \cite[Corollary 3.3]{ABW-Proc} 
(see also \cite[p. 372]{And-Lemp}), we deduce that 
$\ov{\mathcal{A}(\Bn)\cap S(\Bn)}\subseteq S_R(\B^n)$. 
Hence, $S_R(\B^n)=\ov{\mathcal{A}(\Bn)\cap S(\Bn)}$, as desired.
This completes the proof.
 \end{proof}

\begin{question}
Let $A\in L(\Cn)$ be such that $m(A)>0$, and let $n\geq 2$. Is it true that $S^1_A(\Bn)=S^1(\Bn)$?
\end{question}

\begin{remark}
\label{r.a-parametric}
(i) From Proposition \ref{rs}, we deduce that if $n\geq 2$ and $A\in L(\C^n)$ with $m(A)>0$, then
$\ov{S_A^1(\B^n)}=\ov{{\mathcal A}(\B^n)\cap S(\B^n)}$.
Since $\ov{S^1(\B^n)}=\ov{{\mathcal A}(\B^n)\cap S(\B^n)}$ (cf. \cite{ABW-Proc},
\cite{I1}, \cite{S2}), it follows that
$\ov{S_A^1(\B^n)}=\ov{S^1(\B^n)}$.

(ii) The authors in \cite{GHKK-JAM} and \cite{GHKK15} (see also \cite{HIK};
cf. \cite{GHK02})
gave some examples of operators
$A\in L(\Cn)$ with $m(A)>0$, for which $S_A^0(\Bn)\neq S^0(\Bn)$. On the other hand,
in view of \cite[Theorem 3.14]{GHKK-JAM}, if $A\in L(\Cn)$ with $A+A^*=2aI_n$,
for some $a>0$, where $A^*$ is the adjoint
operator of $A$, then $S_A^0(\Bn)=S^0(\Bn)$.
\end{remark}

In the last part of this section we are concerned with an approximation property of
another compact subset of
$S^0(\B^n)$ by automorphisms of $\C^n$, for $n\geq 2$. To this end, let
$${\mathcal Q}(\B^n)=\Big\{f\in H(\B^n): f(0)=0, Df(0)=I_n,
\|Df(z)-I_n\|<1,\,z\in\B^n\Big\}.$$
If $f\in {\mathcal Q}(\B^n)$, then $f\in S^0(\B^n)$ (see \cite[Lemma 2.2]{GHK06}).
If, in addition, $\|Df(z)-I_n\|\leq c$, $z\in\B^n$, for some $c\in [0,1)$,
then $f$ is quasiregular on $\B^n$ and extends to a
quasiconformal homeomorphism of $\C^n$ onto itself (see \cite[Lemma 2.2]{GHK06};
see e.g. \cite[Chapter 8]{GK}).
Note that ${\mathcal Q}(\B^n)$ is a compact subset of $H(\B^n)$.

\begin{theorem}
\label{t.qc}
If $n\geq 2$, then
${\mathcal Q}(\B^n)=\ov{{\mathcal Q}(\B^n)\cap \mathcal{A}(\B^n)}.$
\end{theorem}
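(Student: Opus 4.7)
The plan is to reuse the dilation–and–approximation template from the proofs of Theorems \ref{tspst}, \ref{convaut}, and \ref{grp}, with the geometric condition now being the derivative inequality $\|Df(z)-I_n\|<1$. Fix $f\in {\mathcal Q}(\Bn)$. Since ${\mathcal Q}(\Bn)\subseteq S^0(\Bn)\subseteq S^1(\Bn)$, Proposition \ref{rs} gives $f\in S_R(\Bn)$, so $f(\Bn)$ is Runge. Andersén–Lempert (\cite[Theorem 2.1]{And-Lemp}), together with the normalization argument recalled after \cite[Theorem 2.17]{DGHK} and used in the earlier density proofs, furnishes a sequence $(\psi_k)_{k\in\N}$ of normalized automorphisms of $\Cn$ converging locally uniformly on $\Bn$ to $f$. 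A standard Weierstrass/Cauchy argument then gives $D\psi_k\to Df$ locally uniformly on $\Bn$.

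Next, choose a sequence $(r_m)_{m\in\N}\subset(0,1)$ with $r_m\to 1$ and, for each $m$, set
$$\delta_m=\sup_{\|z\|\le r_m}\|Df(z)-I_n\|.$$
The set $\ov{r_m\Bn}$ is a compact subset of the open ball $\Bn$ on which $z\mapsto\|Df(z)-I_n\|$ is continuous and strictly less than $1$, hence $\delta_m<1$. Using the locally uniform convergence $D\psi_k\to Df$, one picks an increasing sequence $(k_m)_{m\in\N}\subset\N$ with
$$\sup_{\|z\|\le r_m}\|D\psi_{k_m}(z)-Df(z)\|<1-\delta_m.$$
Define $\varphi_m(z)=\tfrac{1}{r_m}\psi_{k_m}(r_m z)$ for $z\in\Bn$, which lies in ${\mathcal A}(\Bn)$ and is normalized. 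For every $z\in\Bn$ the triangle inequality yields
$$\|D\varphi_m(z)-I_n\|=\|D\psi_{k_m}(r_m z)-I_n\|\le\|D\psi_{k_m}(r_m z)-Df(r_m z)\|+\|Df(r_m z)-I_n\|<(1-\delta_m)+\delta_m=1,$$
so $\varphi_m\in {\mathcal Q}(\Bn)\cap{\mathcal A}(\Bn)$. Since $r_m\to 1$ and $\psi_{k_m}\to f$ locally uniformly, the sequence $(\varphi_m)_{m\in\N}$ converges locally uniformly on $\Bn$ to $f$, establishing ${\mathcal Q}(\Bn)\subseteq \ov{{\mathcal Q}(\Bn)\cap{\mathcal A}(\Bn)}$.

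The reverse inclusion is automatic from the compactness of ${\mathcal Q}(\Bn)$ in $H(\Bn)$, noted in the paragraph immediately preceding the theorem. The only delicate point in the whole argument is verifying the strict inequality $\delta_m<1$, which is why the dilation by $r_m<1$ is essential: without it, one would only get $\sup_{\Bn}\|Df-I_n\|\le 1$, which is not enough. All other ingredients—Runge property, Andersén–Lempert density, normalization, and derivative convergence—are already in force in the paper, so no additional machinery is required.
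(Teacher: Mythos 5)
Your proof is correct and follows essentially the same dilation-and-approximation template as the paper's own argument. The only difference is a single step: where you invoke compactness of $\ov{r_m\Bn}$ to conclude $\delta_m=\sup_{\|z\|\le r_m}\|Df(z)-I_n\|<1$, the paper instead applies the Schwarz lemma to the Banach-space-valued map $z\mapsto Df(z)-I_n$ (which vanishes at the origin and has norm below $1$) to obtain the sharper bound $\|Df(r_m z)-I_n\|\le r_m$, and then uses the explicit margin $\tfrac{1-r_m}{2}$ in choosing $k_m$. Both routes yield the required strict separation from $1$ and are equally valid; the paper's choice gives a cleaner quantitative estimate, but your compactness argument is perfectly adequate.
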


\begin{proof}
Let $f\in {\mathcal Q}(\B^n)$. Since $f\in S^0(\B^n)$, there exists a sequence
$(\psi_k)_{k\in\mathbb{N}}$
in ${\rm Aut}(\Cn)$ such that $\psi_k\to f$ locally uniformly on $\B^n$, as $k\to\infty$.
We may assume that $\psi_k$ is normalized, for all $k\in\N$.
Next, let $(r_m)_{m\in\N}$ be a sequence in $(0,1)$ such that $r_m\to 1$.
Taking into account Schwarz's lemma for holomorphic mappings into complex Banach spaces,
we obtain that
$$\|Df(r_mz)-I_n\|\leq r_m,\quad z\in\B^n.$$
Since $\psi_k\to f$ locally uniformly on $\B^n$, as $k\to\infty$,
it follows that for all $m\in\N$, there exists $k_m\in\N$ such that
$$\|D\psi_{k_m}(r_mz)-I_n\|\leq \frac{1+r_m}{2},\quad z\in\B^n.$$
The sequence $(k_m)_{m\in\N}$ may be chosen to be increasing.
Now, if $\varphi_m(z)=\frac{1}{r_m}\psi_{k_m}(r_mz)$, for $z\in\Bn$, then
$$\|D\varphi_m(z)-I_n\|\leq \frac{1+r_m}{2},\quad z\in\Bn,\quad m\in\N.$$
Thus, for every $m\in\N$, $\varphi_m$ is a normalized automorphism of $\C^n$, whose
restriction to $\B^n$ belongs to the family ${\mathcal Q}(\B^n)$.
Also, $\varphi_m\to f$ locally uniformly on $\B^n$, as $m\to\infty$.
Consequently, $f\in \ov{{\mathcal Q}(\B^n)\cap
\mathcal{A}(\B^n)}$, and thus ${\mathcal Q}(\B^n)\subseteq \ov{{\mathcal Q}(\B^n)\cap
\mathcal{A}(\B^n)}$.

Clearly, the inclusion $\ov{{\mathcal Q}(\B^n)\cap
\mathcal{A}(\B^n)}\subseteq {\mathcal Q}(\B^n)$ is obvious.
This completes the proof.
 
\end{proof}

Let  $\widetilde{\mathcal Q}(\B^n)$
be the subset of ${\mathcal Q}(\B^n)$ consisting of all mappings $f$ with the power series
expansion
$f(z)=z+\sum_{k=2}^\infty A_k(z^k)$, $z\in\B^n$,
such that
\begin{equation}
\label{st-qc}
\sum_{k=2}^\infty k\|A_k\|\leq 1,
\end{equation}
where $A_k=\frac{1}{k!}D^kf(0)$ is the
$k$-th order Fr\'echet derivative of $f$ at $z=0$.
Then $\widetilde{\mathcal Q}(\B^n)\subseteq S^0(\B^n)$, since
$\widetilde{\mathcal Q}(\B^n)\subseteq {\mathcal Q}(\B^n)$.
Also, $\widetilde{\mathcal Q}(\B^n)$ is compact.

If $n=1$, then $\widetilde{\mathcal Q}(\U)\subseteq S^*(\U)$
(see e.g. \cite{GK}). It would be interesting to see if the inclusion
$\widetilde{\mathcal Q}(\B^n)\subseteq S^*(\B^n)$ remains true for
$n\geq 2$ (see \cite{GHK06}).

As in the case of the family ${\mathcal Q}(\B^n)$, we may prove the following approximation
result of the family $\widetilde{\mathcal Q}(\B^n)$, where $n\geq 2$.

\begin{theorem}
\label{t.qc2}
If $n\geq 2$, then
$\widetilde{\mathcal Q}(\B^n)=\ov{\widetilde{\mathcal Q}(\B^n)\cap \mathcal{A}(\Bn)}.$
\end{theorem}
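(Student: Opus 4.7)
The plan is to mirror the argument for Theorem \ref{t.qc}, replacing the Schwarz-type bound on $\|Df-I_n\|$ by a bound on the weighted coefficient sum. Fix $f\in\widetilde{\mathcal Q}(\B^n)$, written $f(z)=z+\sum_{j=2}^\infty A_j(z^j)$ with $\sum_{j\ge 2}j\|A_j\|\le 1$. Since $\widetilde{\mathcal Q}(\B^n)\subseteq\mathcal Q(\B^n)\subseteq S^0(\B^n)$, Theorem \ref{p3} with $A=I_n$ supplies normalized automorphisms $\psi_k\in {\rm Aut}(\C^n)$ with $\psi_k\to f$ locally uniformly on $\B^n$; write $\psi_k(z)=z+\sum_{j\ge 2}B_j^{(k)}(z^j)$.

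The algebraic substitute for Schwarz's lemma is the elementary bound $r^{j-1}\le r$ valid for $r\in(0,1]$ and $j\ge 2$, which gives
\begin{equation*}
\sum_{j=2}^\infty jr^{j-1}\|A_j\|\;\le\;r\sum_{j=2}^\infty j\|A_j\|\;\le\;r,
\end{equation*}
leaving slack $1-r$ in the defining inequality of $\widetilde{\mathcal Q}(\B^n)$. Pick $(r_m)\subset(0,1)$ with $r_m\nearrow 1$ and set $\varphi_m(z)=r_m^{-1}\psi_{k_m}(r_mz)$, with $(k_m)$ an increasing integer sequence to be chosen. Each $\varphi_m$ is automatically in $\mathcal{A}(\B^n)$, its $j$-th Taylor coefficient equals $r_m^{j-1}B_j^{(k_m)}$, and by the triangle inequality
\begin{equation*}
\sum_{j=2}^\infty jr_m^{j-1}\|B_j^{(k_m)}\|\;\le\;r_m+E_m,\qquad E_m:=\sum_{j=2}^\infty jr_m^{j-1}\|B_j^{(k_m)}-A_j\|,
\end{equation*}
so the target $\varphi_m\in\widetilde{\mathcal Q}(\B^n)$ reduces to arranging $E_m\le 1-r_m$.

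The main obstacle is the uniform control of $E_m$. I would fix an auxiliary radius $\rho_m\in(r_m,1)$, note that $\epsilon_k:=\sup_{\|z\|\le\rho_m}\|\psi_k(z)-f(z)\|\to 0$ as $k\to\infty$, and use Cauchy's integral formula on $\rho_m\B^n$ to obtain $\|B_j^{(k)}-A_j\|\le c_j\epsilon_k\rho_m^{-j}$ with combinatorial constants $c_j$ independent of $k$. Substituting,
\begin{equation*}
E_m\;\le\;\frac{\epsilon_{k_m}}{\rho_m}\sum_{j=2}^\infty jc_j\Bigl(\frac{r_m}{\rho_m}\Bigr)^{j-1},
\end{equation*}
and once the inner series converges to some finite $C_m$ one selects $k_m$ large enough so that $C_m\epsilon_{k_m}/\rho_m<1-r_m$; a diagonal procedure simultaneously arranges $k_m\to\infty$ and $\varphi_m\to f$ locally uniformly on $\B^n$, whence $\varphi_m\in\widetilde{\mathcal Q}(\B^n)\cap\mathcal{A}(\B^n)$, giving the nontrivial inclusion. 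The delicate point is the growth of the polarization constants $c_j$: for the polynomial norm one has $c_j=1$ and the series converges whenever $r_m<\rho_m$, while for the symmetric multilinear norm $c_j$ grows like $j^j/j!$ and convergence requires the more restrictive $r_m<\rho_m/e$, which may force a careful balancing of $r_m$ and $\rho_m$ or a refinement of the coefficient bound using additional structure of the approximating automorphisms. The reverse inclusion is immediate from the compactness of $\widetilde{\mathcal Q}(\B^n)$.
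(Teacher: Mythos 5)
Your proposal is essentially the paper's argument: rescale $\psi_{k_m}$ by $r_m$, estimate the coefficients of $\varphi_m - r_m^{-1}f(r_m\cdot)$ by Cauchy estimates on a slightly larger ball, use $r_m^{j-1}\le r_m$ to leave slack $1-r_m$, and pick $k_m$ so that the error series fits into that slack. The paper concretely takes $\rho_m=\frac{1+r_m}{2}$ (so the radius ratio is $\frac{2r_m}{1+r_m}<1$) and pre-specifies $\varepsilon_m$ so that the tail sum is at most $\frac{1-r_m}{2}$, but this is exactly your $E_m\le 1-r_m$ scheme.

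The one point you flag as delicate is already settled by the conventions in force: in $(\ref{st-qc})$ (and in the cited Roper--Suffridge condition $(\ref{cv1})$), $\|A_k\|$ denotes the sup-norm of the homogeneous polynomial $z\mapsto A_k(z^k)$ over the unit ball, not the symmetric multilinear norm, so the Cauchy estimate gives $\|B_j^{(k)}-A_j\|\le \epsilon_k\rho_m^{-j}$ with $c_j=1$; the geometric series $\sum_j j\,(r_m/\rho_m)^{j-1}$ then converges for any $\rho_m>r_m$ and no balancing against $e$ is needed. With that, your argument closes exactly as the paper's does, and the reverse inclusion follows from compactness of $\widetilde{\mathcal Q}(\B^n)$ as you say.
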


\begin{proof}
Let $f\in \widetilde{\mathcal Q}(\B^n)$. Since $f\in S^0(\B^n)$, there exists a sequence
$(\psi_j)_{j\in\mathbb{N}}$
in ${\rm Aut}(\Cn)$ such that $\psi_j\to f$ locally uniformly on $\B^n$, as $j\to\infty$.
We may assume that $\psi_j$ is normalized, for all $j\in\N$.
Next, let $(r_m)_{m\in\N}$ be an increasing sequence in $(0,1)$ such that $r_m\to 1$.
Let $\varepsilon_m>0$ be such that
$$\frac{\varepsilon_m}{r_m}\sum_{k=2}^{\infty}k\left(\frac{2r_m}{1+r_m}\right)^k
\leq\frac{1-r_m}{2}.$$
Since $\psi_j\to f$ locally uniformly on $\B^n$, as $j\to\infty$,
it follows that for all $m\in\N$, there exists $j_m\in\N$ such that
$$\|\psi_{j_m}(z)-f(z)\|\leq \varepsilon_m,\quad \| z\|\leq \frac{1+r_m}{2}.$$
The sequence $(j_m)_{m\in\N}$ may be chosen to be increasing.
Now, if $\varphi_m(z)=\frac{1}{r_m}\psi_{j_m}(r_mz)$, for $z\in\Bn$, then
$$
\left\|\varphi_m(z)-\frac{1}{r_m}f(r_m z)\right\|
\leq
\frac{\varepsilon_m}{r_m} ,
\quad \| z\|\leq \frac{1+r_m}{2r_m},\quad m\in\N.
$$
Let $f(z)=z+\sum_{k=2}^\infty A_k(z^k)$, $z\in\B^n$,
and
$\varphi_m(z)=z+\sum_{k=2}^\infty B_k(z^k)$, $z\in\B^n$.
Then the above inequality implies that
$$\| B_k-A_kr_m^{k-1}\|\leq
\frac{\varepsilon_m}{r_m}  \left(\frac{2r_m}{1+r_m}\right)^k,
\quad k\geq 2.$$
It follows that
$$\sum_{k=2}^{\infty}k\| B_k\|
\leq
\sum_{k=2}^{\infty}k\frac{\varepsilon_m}{r_m}  \left(\frac{2r_m}{1+r_m}\right)^k
+\sum_{k=2}^{\infty}k\| A_k\|r_m^{k-1}$$
$$\leq
\frac{1-r_m}{2}+r_m=\frac{1+r_m}{2}.$$
Thus, for every $m\in\N$, $\varphi_m$ is a normalized automorphism of $\C^n$, whose
restriction to $\B^n$ satisfies (\ref{st-qc}), and thus $\varphi_m$
belongs to the family $\widetilde{\mathcal Q}(\B^n)$.
Hence, $f\in \ov{\widetilde{\mathcal Q}(\B^n)\cap
\mathcal{A}(\B^n)}$, and thus $\widetilde{\mathcal Q}(\B^n)\subseteq
\ov{\widetilde{\mathcal Q}(\B^n)\cap\mathcal{A}(\B^n)}$.

Clearly, the inclusion $\ov{\widetilde{\mathcal Q}(\B^n)\cap
\mathcal{A}(\B^n)}\subseteq \widetilde{\mathcal Q}(\B^n)$ is obvious. This completes the proof.
 
\end{proof}

Finally, we obtain an approximation result by automorphisms of $\C^n$ ($n\geq 2$)
for a subset of $K(\B^n)$. To this end, let $\widetilde{K}(\B^n)$ be the set
of all normalized mappings $f\in H(\B^n)$ with the power series expansion
$f(z)=z+\sum_{k=2}^\infty A_k(z^k)$, $z\in\B^n$, such that
\begin{equation}
\label{cv1}
\sum_{k=2}^\infty k^2\|A_k\|\leq 1.
\end{equation}
Then $\widetilde{K}(\B^n)\subseteq K(\B^n)$ (see \cite[Theorem 2.1]{RoS}). Since
the condition (\ref{cv1}) implies that $\sum_{k=2}^\infty k\|A_k\|\leq 1/2$, it follows that
$\widetilde{K}(\B^n)\subseteq \widetilde{\mathcal Q}(\B^n)$, and every mapping
$f\in \widetilde{K}(\B^n)$ is quasiregular on $\B^n$ and has a quasiconformal extension to $\C^n$
(see \cite{GHK06}). Also, $\widetilde{K}(\B^n)$ is compact.

Using arguments similar to those in the proof of Theorem \ref{t.qc2},
we obtain the following approximation result of $\widetilde{K}(\B^n)$. We omit the proof.

\begin{proposition}
\label{t.qc3}
If $n\geq 2$, then
$\widetilde{K}(\B^n)=\ov{\widetilde{K}(\B^n)\cap {\mathcal A}(\B^n)}$.
\end{proposition}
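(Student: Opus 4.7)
The plan is to mimic the construction used for Theorem \ref{t.qc2} almost verbatim, modifying only the auxiliary numerical parameters so that the strengthened coefficient condition $\sum_{k=2}^\infty k^2\|A_k\|\le 1$ defining $\widetilde{K}(\B^n)$ is preserved instead of $\sum_{k=2}^\infty k\|A_k\|\le 1$. The reverse inclusion $\ov{\widetilde{K}(\B^n)\cap\mathcal{A}(\B^n)}\subseteq\widetilde{K}(\B^n)$ is immediate because $\widetilde{K}(\B^n)$ is compact, so only the forward inclusion requires work.

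Let $f\in\widetilde{K}(\B^n)$. The chain $\widetilde{K}(\B^n)\subseteq\widetilde{\mathcal Q}(\B^n)\subseteq S^0(\B^n)$ combined with Corollary \ref{c.density1} (applied with $A=I_n$, for which $k_+(I_n)=1<2=2m(I_n)$) furnishes a sequence $(\psi_j)_{j\in\N}$ of normalized automorphisms of $\C^n$ converging to $f$ locally uniformly on $\B^n$. I would then pick an increasing sequence $r_m\nearrow 1$ in $(0,1)$ and, using that $\frac{2r_m}{1+r_m}<1$ makes the series below convergent, choose $\varepsilon_m>0$ small enough that
$$\frac{\varepsilon_m}{r_m}\sum_{k=2}^\infty k^2\left(\frac{2r_m}{1+r_m}\right)^k\le\frac{1-r_m}{2}.$$
This is the sole substantive deviation from Theorem \ref{t.qc2}: the weight $k^2$ inside the tail (rather than $k$) can still be absorbed because the geometric factor decays faster than any polynomial in $k$.

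Next, pick an increasing $(j_m)\subseteq\N$ with $\|\psi_{j_m}(z)-f(z)\|\le\varepsilon_m$ whenever $\|z\|\le\tfrac{1+r_m}{2}$, and set $\varphi_m(z)=\tfrac{1}{r_m}\psi_{j_m}(r_m z)$, which is a normalized automorphism of $\C^n$. Exactly as in the proof of Theorem \ref{t.qc2}, the bound
$$\Big\|\varphi_m(z)-\tfrac{1}{r_m}f(r_m z)\Big\|\le\tfrac{\varepsilon_m}{r_m},\qquad \|z\|\le\tfrac{1+r_m}{2r_m},$$
together with standard Cauchy-type estimates on the homogeneous polynomial terms, yields, for the expansions $f(z)=z+\sum_{k\ge 2}A_k(z^k)$ and $\varphi_m(z)=z+\sum_{k\ge 2}B_k(z^k)$, the coefficient inequalities
$$\|B_k-r_m^{k-1}A_k\|\le\frac{\varepsilon_m}{r_m}\left(\frac{2r_m}{1+r_m}\right)^k,\qquad k\ge 2.$$

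Multiplying by $k^2$ and summing, using $r_m^{k-1}\le r_m$ for $k\ge 2$ together with $f\in\widetilde{K}(\B^n)$, I obtain
$$\sum_{k=2}^\infty k^2\|B_k\|\le\frac{1-r_m}{2}+r_m\sum_{k=2}^\infty k^2\|A_k\|\le\frac{1-r_m}{2}+r_m=\frac{1+r_m}{2}<1,$$
so $\varphi_m\in\widetilde{K}(\B^n)\cap\mathcal{A}(\B^n)$, and $\varphi_m\to f$ locally uniformly on $\B^n$ by construction. There is no serious obstacle beyond this single parameter calibration, which is presumably why the authors omitted the proof.
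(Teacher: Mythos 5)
Your proposal is correct and is exactly the argument the paper has in mind: the authors explicitly omit the proof of Proposition \ref{t.qc3} on the grounds that it follows the proof of Theorem \ref{t.qc2} verbatim, and your write-up reproduces that template with the only necessary change, namely replacing the weight $k$ by $k^2$ in the choice of $\varepsilon_m$ (which is harmless since $\sum_{k\ge 2}k^2 q^k<\infty$ for $0<q<1$) and in the final summation, yielding $\sum_{k\ge 2}k^2\|B_k\|\le\frac{1+r_m}{2}<1$ as required.
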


\section*{Acknowledgments}
\label{acknowledgments}
H. Hamada was partially supported by JSPS KAKENHI Grant Number
JP16K05217.
S. Schlei{\ss}inger was supported by the ERC grant "HEVO - Holomorphic Evolution Equations"
nr. 277691.

Some of the research for this paper has been carried out in August, 2016, when
Gabriela Kohr visited the Department of Mathematics of the University
of Toronto. She expresses her gratitude to the members of this department for their
hospitality during that visit.

\end{document}